\documentclass{amsart}
\usepackage{amsmath,amsthm,amssymb,amsfonts}
\usepackage[latin1]{inputenc}

\usepackage[pdftex]{graphicx}
\DeclareGraphicsExtensions{.png,.pdf,.jpg}
\usepackage{color}
\usepackage{hyperref}

\newtheorem{lema}{Lemma}
\theoremstyle{definition}
\newtheorem{teor}{Theorem}
\newtheorem{append}{Note}
\newtheorem{rem}{Remark}
\newtheorem{prop}{Proposition}

\let\abs=\envert
\theoremstyle{remark}

\usepackage{color}

\begin{document}

\title{Revisiting floating bodies}

\author{\'Oscar Ciaurri, Emilio Fern\'andez, and Luz Roncal}
\email{oscar.ciaurri@unirioja.es, eferna35@gmail.com, luz.roncal@unirioja.es}

\address{Departamento de Matem\'aticas y Computaci\'on,
Universidad de La Rioja, Edificio J.~L.~Vives, Calle Luis de Ulloa
s/n, 26004 Logro\~no, Spain.}

\begin{abstract}
The conic sections, as well as the solids obtained by revolving these curves, and many of their surprising properties,
were already studied by Greek mathematicians since at least the fourth century B.C.
Some of these properties come to the light, or are rediscovered, from time to time. In this paper we
characterize the conic sections as the plane curves whose tangent lines cut off from a certain similar curve segments of constant area. We also characterize some quadrics as the surfaces whose tangent planes cut off from a certain similar surface compact sets of constant volume. Our work is developed in the most general multidimensional case.
\end{abstract}

\maketitle
\thispagestyle{empty}

\section{Introduction}

Throughout literature, we can find several properties of the plane regions obtained by cutting off with a line from conic sections and of the solids obtained by cutting off with a plane from quadrics of revolution. Let us show a brief chronological exposition of some results concerning the topic so far.

In his book \emph{Quadrature of the Parabola} (QP), Archimedes proved the following result (see \cite[(QP), Propositions 17 and 24, p. 246-252]{heath}):

\begin{prop}[QP]
\label{prop:mainQP}
The area of any segment of a parabola is four thirds of the area of the triangle which has the same base as the segment and equal height.
\end{prop}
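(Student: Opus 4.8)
The plan is to reduce to a normalized parabola by an affine change of coordinates and then compute both areas directly by integration. First I would recall that every parabola is the image of the model curve $y=x^2$ under an invertible affine map of the plane, that such maps multiply all areas by the constant $\abs{\det}$ of their linear part, and that they carry chords to chords and tangent lines to tangent lines. Consequently the ratio of the area of a parabolic segment to the area of the triangle ``with the same base and equal height'' is an affine invariant, and it suffices to prove the statement for $y=x^2$. Here the phrase \emph{equal height} is read in the Archimedean sense: the third vertex of the triangle is the vertex of the segment, i.e.\ the point of the parabola at which the tangent is parallel to the base of the segment; since parallelism of a tangent to a fixed direction is itself an affine-invariant condition, this reading survives the reduction.

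Next I would take a general chord of $y=x^2$ joining $(a,a^2)$ and $(b,b^2)$ with $a<b$. Its supporting line is $y=(a+b)x-ab$, and the vertical gap between line and parabola is $(a+b)x-ab-x^2=(x-a)(b-x)\ge 0$ on $[a,b]$. Hence the area $S$ of the segment equals $\int_a^b (x-a)(b-x)\,dx=(b-a)^3/6$. Then I would identify the apex of the inscribed triangle: the tangent at $(x_0,x_0^2)$ has slope $2x_0$ and the chord has slope $a+b$, so the tangent is parallel to the base exactly when $x_0=(a+b)/2$, giving the vertex $C=\bigl((a+b)/2,\ (a+b)^2/4\bigr)$. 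Computing the area $T$ of the triangle with vertices $(a,a^2)$, $(b,b^2)$, $C$ via the determinant formula yields $T=(b-a)^3/8$. Dividing, $S/T=4/3$, which by the affine invariance of the ratio is the assertion in full generality.

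I do not expect a genuine obstacle in the computation; the only point demanding care is the reduction step — verifying that the relevant data (segment, base, tangency, ratio of areas) all transform correctly under affine maps, so that normalizing to $y=x^2$ loses no generality. As an alternative one could follow Archimedes' own route by exhaustion: inscribe the apex triangle, recursively inscribe smaller triangles in the two remaining sub-segments, check that at each stage the newly added triangles contribute one quarter of the area added at the previous stage, and sum the geometric series $1+\tfrac14+\tfrac1{16}+\cdots=\tfrac43$; in that approach the delicate part is instead the rigorous passage to the limit together with the verification of the one-quarter ratio at a generic step, which is precisely what the affine-plus-calculus argument sidesteps.
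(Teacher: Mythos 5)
Your proof is correct, and in fact the paper contains no proof of this proposition at all: it is quoted as Archimedes' classical result from the \emph{Quadrature of the Parabola} and cited from Heath, so there is nothing in the paper for your argument to diverge from. Your computations check out: for $y=x^2$ the segment over the chord joining $(a,a^2)$ and $(b,b^2)$ has area $\int_a^b (x-a)(b-x)\,dx=\tfrac{1}{6}(b-a)^3$, the apex (the Archimedean vertex, where the tangent slope $2x_0$ equals the chord slope $a+b$) is at $x_0=\tfrac{a+b}{2}$, the inscribed triangle has area $\tfrac{1}{8}(b-a)^3$ by the Vandermonde determinant, and the ratio is $\tfrac43$; the reduction to $y=x^2$ is legitimate because affine maps preserve chords, tangency, parallelism, midpoints, and ratios of areas, so the Archimedean notions of base, vertex and height transport correctly. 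It is worth noting that the paper implicitly relies on exactly your computation later on: in the proof of the characterization theorem of Section 2 it evaluates the segment area as $\tfrac23$ times the determinant with rows $(1,\xi_j,\xi_j^2)$ and the midpoint row, obtaining $\tfrac16(\xi_2-\xi_1)^3$, which is Proposition~\ref{prop:mainQP} specialized to $z=x^2$ in the same coordinates you use. Your closing remark about the exhaustion argument correctly identifies the historical route and its delicate points (the one-quarter ratio at each stage and the limit passage), which your calculus argument indeed bypasses.
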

Here, the \emph{base} of the parabolic segment is just the chord of the segment. The \emph{height} is the segment $TR$, where $T$ is the midpoint of the base and $R$ is the point where the line parallel to the axis of the parabola through $T$ meets the parabola, see Figure \ref{parabola0}. The line tangent to the parabola in $R$ is parallel to the base. The point $R$ is the \emph{vertex} of the segment \cite[(QP), Proposition 1, p. 234 and Proposition 18, p. 247]{heath}.

Moreover, Archimedes uses the above result at the beginning of his book \emph{On Conoids and Spheroids} (CS)  to prove (see \cite[(CS), Proposition III; t. I, p. 149-152]{verecke}), see Figure \ref{fig:par0}:
\begin{prop}[CS]
\label{prop:3}
If from a same parabola two segments are cut off in any way, and the diameters of the segments are equal, then the segments will have equal area.
\end{prop}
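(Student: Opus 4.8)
The plan is to follow Archimedes' own strategy and reduce the statement to Proposition~\ref{prop:mainQP} (the quadrature of the parabola), supplemented by a short computation in coordinates adapted to the given parabola. The aim of that computation is to show that, for a fixed parabola, both the diameter of a segment (the segment $TR$ above) and the area of the segment are determined by one and the same quantity, namely the difference of the abscissas of the endpoints of its base.

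First I would fix coordinates. Taking the axis of the parabola to be the $y$-axis and then applying a similarity (a uniform dilation), which multiplies all lengths by a common factor and all areas by its square, and hence preserves both the hypothesis ``equal diameters'' and the conclusion ``equal areas'', I may assume the parabola is $y=x^2$. A segment is then determined by its base, a chord with endpoints $A=(a,a^2)$ and $B=(b,b^2)$, $a\neq b$; its midpoint is $T=\bigl(\tfrac{a+b}{2},\tfrac{a^2+b^2}{2}\bigr)$, the line through $T$ parallel to the axis is the vertical line $x=\tfrac{a+b}{2}$, and this line meets the parabola at the vertex $R=\bigl(\tfrac{a+b}{2},\tfrac{(a+b)^2}{4}\bigr)$ of the segment.

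Next I would carry out the two computations. For the diameter, $|TR|=\tfrac{a^2+b^2}{2}-\tfrac{(a+b)^2}{4}=\tfrac14(a-b)^2$; thus the diameter of a segment of $y=x^2$ equals $\tfrac14(a-b)^2$ and, conversely, it determines $|a-b|$. For the area, Proposition~\ref{prop:mainQP} gives that the area of the segment is $\tfrac43$ times the area of the triangle $ABR$, i.e.\ the triangle having the base of the segment as base and the vertex $R$ of the segment as apex (this is the ``triangle with the same base and equal height'' of that proposition, the height being $|TR|$); evaluating this triangular area by the determinant formula, one finds it equals $\tfrac18|a-b|^3$, so the segment has area $\tfrac16|a-b|^3$. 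Hence the area of a segment of $y=x^2$ depends only on $|a-b|$; combining the two formulas, it equals $\tfrac43\,|TR|^{3/2}$, and so depends only on the diameter $|TR|$. Therefore two segments of the same parabola with equal diameters have equal area.

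I do not expect a serious obstacle: the points that need care are the choice of coordinates adapted to the given parabola (with the routine check that the reduction to $y=x^2$ is legitimate) and the correct reading of Proposition~\ref{prop:mainQP}, namely that the relevant triangle is the inscribed triangle $ABR$ whose apex is the vertex of the segment, since only for this triangle does the area come out independent of the position of the base. As an alternative to the coordinate computation one could argue purely geometrically: an area-preserving shear parallel to the axis, followed by a suitable translation, carries $y=x^2$ onto itself and merely translates the abscissas of the endpoints of a base by a common constant; hence any two segments with the same value of $|a-b|$---equivalently, by the computation above, with the same diameter---are carried onto each other by an area-preserving affine transformation, and the conclusion follows without computing either area.
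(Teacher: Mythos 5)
Your proof is correct: the reduction by a similarity to $y=x^2$, the identity $\abs{TR}=\tfrac14(a-b)^2$, and the evaluation of the inscribed triangle giving segment area $\tfrac16\abs{a-b}^3=\tfrac43\abs{TR}^{3/2}$ are all right, and your reading of Proposition~\ref{prop:mainQP} via the inscribed triangle with apex at the vertex $R$ is the intended one. The paper does not reprove this Archimedean proposition (it cites (CS) directly), but your argument is essentially the same determinant computation that appears in the paper's proof of the converse part of the theorem in Section~\ref{sec:parabolas}, where the segment area comes out as $\tfrac16(\xi_2-\xi_1)^3=\tfrac43\bigl(\overline{z}-\overline{m}\,\overline{x}+\tfrac{\overline{m}^2}{4}\bigr)^{3/2}$, i.e.\ a function of the diameter alone.
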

Here, Archimedes calls \emph{diameter} the segment $TR$ that he called \emph{height} in (QP). In fact, $TR$ bisects all the chords parallel to the base of the segment.

\begin{figure}[t]
\label{parabola0}
\begin{center}
\includegraphics[scale=.8]{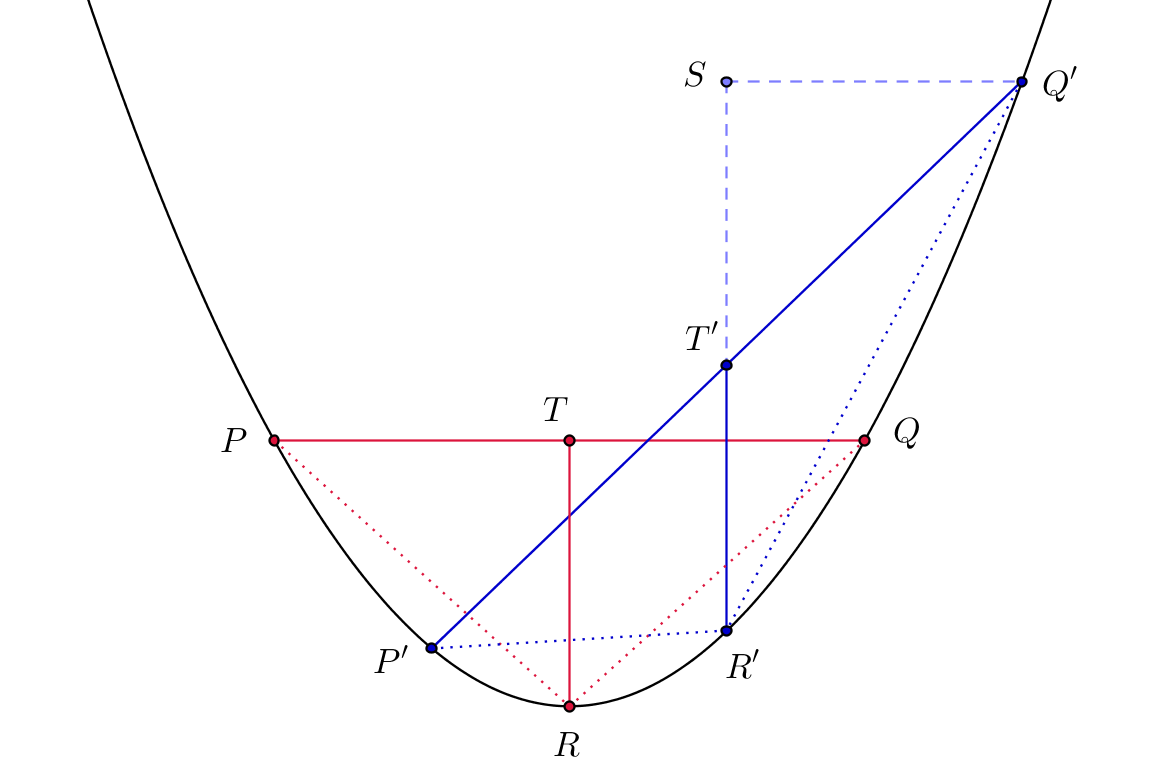}
\caption{$T$ and $T'$ are midpoints of the respective chords. When $TR=T'R'$, then $QT=Q'S$, and the parabolic segments cut off by the chords $PQ$ and $P'Q'$ have equal area.}\label{fig:par0}
\end{center}
\end{figure}

The first propositions in (CS) concern plane geometry, some of them dealing with the quadrature of the ellipse (Propositions 4-6) (we do not know any result by Archimedes related to the quadrature of elliptic or hyperbolic segments). But the main target in (CS) is to show the volumes of the solid segments cut off by a plane from a paraboloid of revolution (or \emph{right-angled conoid}), a two-sheeted hyperboloid of revolution  (or \emph{obtuse-angled conoid}), and an ellipsoid of revolution (or \emph{spheroid}).

In this way, generalizing the main result in (QP), Archimedes proves in (CS) (see \cite[(CS), Propositions 21 and 22, p. 131-133]{heath}):
\begin{prop}[CS]
\label{prop:21-22}
Any segment of a paraboloid of revolution is half as large again as the cone or segment of a cone which has the same base and the same axis.
\end{prop}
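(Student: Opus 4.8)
The plan is to strip away the two kinds of generality concealed in the statement — the cutting plane need not be orthogonal to the axis, and the comparison solid need not be a right circular cone but only a ``segment of a cone'' with the prescribed base and axis — by a single volume–preserving shear, and then to dispose of the remaining orthogonal case in one line by Cavalieri's principle.

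To begin, I would normalise by a similarity (which alters no volume ratio) so that the paraboloid of revolution is $P=\{(x,y,z):z=x^{2}+y^{2}\}$, and write a generic cutting plane as $\Pi=\{z=ax+by+c\}$ with $c+\tfrac14(a^{2}+b^{2})>0$; this last inequality is exactly what makes $\Pi\cap P$ a bounded ellipse, so that the \emph{segment} $S$ — the compact region trapped between $\Pi$ and $P$ — is well defined. The key move is the linear shear
\[
\Phi(x,y,z)=\bigl(x,\;y,\;z-ax-by\bigr),
\]
of determinant $1$, hence volume preserving. It carries $\Pi$ onto the horizontal plane $\{z=c\}$ and $P$ onto $\{z=(x-\tfrac a2)^{2}+(y-\tfrac b2)^{2}-\tfrac14(a^{2}+b^{2})\}$, a \emph{translate} of $P$ and therefore a congruent paraboloid of revolution with vertical axis. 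Being affine, $\Phi$ sends tangent planes to tangent planes and respects parallelism; consequently it maps the vertex $R$ of $S$ (the point of $P$ whose tangent plane is parallel to $\Pi$) to the vertex of the transformed segment, the elliptic base $\Pi\cap P$ to the circular base of the transformed segment, the diameter $TR$ to the vertical axis of the latter, and the ``cone or segment of a cone with the same base and axis'' to the genuine right circular cone over that circle — all without changing any volume. This reduces the proposition to the case in which the base is perpendicular to the axis.

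In that case, after translating the vertex of the segment to the origin we have $P=\{z=x^{2}+y^{2}\}$ cut by $\{z=h\}$, where $h>0$ is the length of the axis of the segment. The horizontal section of $S$ at height $z\in[0,h]$ is a disc of radius $\sqrt z$, of area $\pi z$; hence by Cavalieri
\[
\operatorname{vol}(S)=\int_{0}^{h}\pi z\,dz=\frac{\pi h^{2}}{2},
\]
whereas the right circular cone with the same base (a disc of radius $\sqrt h$, area $\pi h$) and the same axis (of length $h$) has volume $\tfrac13\pi h\cdot h=\tfrac{\pi h^{2}}{3}$. The ratio is $\tfrac32$, i.e.\ the segment is ``half as large again'' as the cone. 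If one prefers to keep closer to Archimedes, the value $\operatorname{vol}(S)=\tfrac12\pi h^{2}$ can instead be obtained by squeezing $S$ between inscribed and circumscribed stacks of coaxial cylinders — the method of exhaustion actually used in (CS), in the same spirit as the planar computation recalled in Proposition~\ref{prop:mainQP}.

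The only point that asks for genuine care is the middle paragraph: one must check that $\Phi$ is faithful to every notion entering the statement — ``base'', ``axis/diameter'', and the identification of the comparison solid — and this is precisely where the linearity of $\Phi$ is used (in particular, the transformed comparison cone really is a \emph{right} circular cone because $\Phi$ sends $R$ to the point lying directly below the centre of the new base circle). Everything else is bookkeeping. I would also note that the argument is dimension–free: for $P=\{x_{n+1}=x_{1}^{2}+\dots+x_{n}^{2}\}$ in $\mathbb R^{n+1}$ the same shear reduction applies, and the slicing step gives $\operatorname{vol}(S)=\tfrac{2}{n+2}\,\omega_{n}h^{1+n/2}$ against a cone of volume $\tfrac{1}{n+1}\,\omega_{n}h^{1+n/2}$, where $\omega_{n}$ is the volume of the unit $n$-ball; the constant $\tfrac32$ thus becomes $\tfrac{2(n+1)}{n+2}$, reducing to $\tfrac32$ exactly when $n=2$.
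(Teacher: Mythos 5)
Your argument is correct, and it is essentially the method the paper itself uses: the paper quotes this proposition from Archimedes but proves its $(n+1)$-dimensional generalization in Lemma~\ref{lema2} via a unit-Jacobian affine change of variables (exactly your shear, composed with a translation) that straightens the cutting plane, followed by integration over the resulting spherical slices and comparison with the cone over the transformed base, yielding the ratio $\tfrac{2(n+1)}{n+2}$, which is $\tfrac32$ for $n=2$. Your reduction, the verification that the shear sends vertex, base, axis and comparison cone to their right counterparts, and your closing remark on the dimension-free constant all match that treatment.
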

The definitions of base and vertex are the suitable adaptations from the one-dimensional case. The axis is the segment joining the vertex with the center of the base. In \cite[(CS), Proposition 23, p. 134-136]{heath}, he also proves the three dimensional version of Proposition III of (CS), namely:
\begin{prop}[CS]
\label{prop:23}
If from a paraboloid of revolution two segments are cut off, and if the axes of the segments are equal, the segments will be equal in volume.
\end{prop}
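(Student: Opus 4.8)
The plan is to combine Proposition~\ref{prop:21-22} with a short computation showing that the cone it introduces has a volume depending only on the length of the axis, not on how the cutting plane was chosen. To set things up, I would first observe that a similarity of $\mathbb{R}^{3}$ multiplies all volumes by the same factor and carries a paraboloid of revolution to a paraboloid of revolution, so I may take the surface to be $\mathcal{P}=\{z=x^{2}+y^{2}\}$. Any plane that cuts off a compact segment is transverse to the axis, hence a graph $\pi\colon z=\alpha x+\beta y+\gamma$, and the segment is $\mathcal{S}=\{(x,y,z):x^{2}+y^{2}\le z\le \alpha x+\beta y+\gamma\}$. Eliminating $z$ shows that the orthogonal projection onto $\{z=0\}$ maps $\mathcal{S}$ onto the closed disk $D$ of radius $\rho$ centered at $(\tfrac{\alpha}{2},\tfrac{\beta}{2})$, where $\rho^{2}=\tfrac{\alpha^{2}}{4}+\tfrac{\beta^{2}}{4}+\gamma>0$, and maps the base $E:=\mathcal{S}\cap\pi$ (an elliptical disk, with $\partial E=\mathcal{P}\cap\pi$) onto $D$ as well. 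The vertex $R$ of the segment, i.e.\ the point of $\mathcal{P}$ whose tangent plane is parallel to $\pi$, equals $(\tfrac{\alpha}{2},\tfrac{\beta}{2},\tfrac{\alpha^{2}}{4}+\tfrac{\beta^{2}}{4})$, which projects to the center of $D$; and so does the center $T$ of $E$. Hence the axis $RT$ is parallel to the $z$-axis and $\abs{RT}=\rho^{2}$, so that two segments have equal axes precisely when they have the same $\rho$.

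The heart of the argument is then the following computation. By Proposition~\ref{prop:21-22} one has $\operatorname{vol}(\mathcal{S})=\tfrac{3}{2}\operatorname{vol}(C)$, where $C$ is the cone with apex $R$ and base $E$, and $\operatorname{vol}(C)=\tfrac{1}{3}\operatorname{area}(E)\cdot d(R,\pi)$. Since orthogonal projection onto $\{z=0\}$ scales planar areas by $\cos\theta$, with $\theta$ the dihedral angle between $\pi$ and $\{z=0\}$, one gets $\operatorname{area}(E)=\pi\rho^{2}/\cos\theta=\pi\rho^{2}\sqrt{1+\alpha^{2}+\beta^{2}}$, while a direct computation gives $d(R,\pi)=\rho^{2}/\sqrt{1+\alpha^{2}+\beta^{2}}$. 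The two tilt factors cancel, so $\operatorname{vol}(C)=\tfrac{\pi}{3}\rho^{4}$, which depends only on $\rho$; therefore $\operatorname{vol}(\mathcal{S})=\tfrac{\pi}{2}\rho^{4}=\tfrac{\pi}{2}\abs{RT}^{2}$ depends only on the axis, which is exactly the assertion.

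The delicate point is the cancellation just used: one has to identify the base $E$ correctly, express its area through the projection onto $\{z=0\}$, and evaluate the apex-to-base distance so that the factors $\sqrt{1+\alpha^{2}+\beta^{2}}$ visibly disappear. In Archimedes' own terms, this step says that the oblique cone $C$ has the same volume as the right cone over a circle of radius $\rho$ with axis of length $\rho^{2}$, after which Proposition~\ref{prop:21-22} identifies $\operatorname{vol}(\mathcal{S})$ with the volume of the right segment that has the same axis. Finally, should one wish to rely on Proposition~\ref{prop:3} rather than on Proposition~\ref{prop:21-22}, there is a variant along the lines of the planar case: slicing $\mathcal{S}$ by the family of vertical planes parallel to a fixed horizontal direction, each slice is the region between a parabola (a vertical section of $\mathcal{P}$) and a chord (the corresponding section of $\pi$), that is, a parabolic segment, whose diameter works out to $\rho^{2}-c^{2}$, with $c$ the signed distance from the slicing plane to $T$; so, after translating one segment to make the two centers coincide, corresponding slices are parabolic segments of equal diameter, hence of equal area by Proposition~\ref{prop:3}, and Cavalieri's principle gives equal volumes.
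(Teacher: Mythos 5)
Your proof is correct, and it follows a route different from the paper's treatment. Note that the paper never proves Proposition \ref{prop:23}: it is quoted from Archimedes (via Heath), and its modern counterpart in the paper is Lemma \ref{lema2}, whose case $n=2$ expresses the volume cut off by any plane as $F_0(\zeta_P)=\frac{\pi}{2}\zeta_P^{2}$, where $\zeta_P=\overline{z}-\sum\overline{p_i}\,\overline{x_i}+\sum\overline{p_i}^2/4$ is exactly the axis length, so the proposition drops out. You instead reduce by similarity to $z=x^2+y^2$, identify the vertex $R$, the centre $T$ of the elliptical base, and the axis length $\abs{RT}=\rho^2$ with $\rho^2=\frac{\alpha^2}{4}+\frac{\beta^2}{4}+\gamma$, and then lean on Proposition \ref{prop:21-22}: the tilt factors indeed cancel, since the base has area $\pi\rho^{2}\sqrt{1+\alpha^{2}+\beta^{2}}$ while the distance from $R$ to the cutting plane is $\rho^{2}/\sqrt{1+\alpha^{2}+\beta^{2}}$, giving cone volume $\frac{\pi}{3}\rho^{4}$ and segment volume $\frac{\pi}{2}\rho^{4}$, in agreement with the value $V_3(\mathcal{S})=\frac{\pi}{2}k^4$ recorded in the proof of Theorem \ref{theor:par}. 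The paper's Lemma \ref{lema2} gets the same formula by a unit-Jacobian shear and integration by slices parallel to the cutting plane; that computation needs no Archimedean input, works in every dimension, and yields both the explicit function $F_0$ and the generalized ratio $\frac{2(n+1)}{n+2}$ used later, whereas your argument stays within Archimedes' own toolkit for $n=2$ and exhibits the cancellation geometrically. Your alternative via Proposition \ref{prop:3} and Cavalieri is also sound: the slice diameter $\rho^2-c^2$ checks out, with the small caveat that the parabolas arising in different vertical slices are congruent rather than identical, so you are using the (harmless) strengthening of Proposition \ref{prop:3} that congruent parabolic segments of equal diameter have equal area.
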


From Propositions III and 23 of (CS), respectively, two corollaries follow: given a parabola (respectively, a paraboloid) $\mathcal{P}$, any line (plane) tangent to another parabola (paraboloid) $\mathcal{P}'$ which is a translation of $\mathcal{P}$ along its axis of symmetry cuts off from $\mathcal{P}$ a segment of constant area (volume). We find it in the literature as one of the problems of the Cambridge Examinations for the Degree of B. A. around 1800: \emph{Required to prove, that if two parabolas of the same parameter, revolve round their axes, which are in the same straight line, thereby generating two paraboloids one within the other, and a plane be drawn touching the interior paraboloid in any point whatever of its surface, this plane will cut off from the exterior paraboloid a constant volume} \cite[p. 189-190]{wright}.

On the other hand, the reciprocal statements are true, namely: If the straight lines (planes, respectively) tangent to a certain smooth and regular curve (surface) cut off from a given parabola (paraboloid) segments of constant area (volume), then the curve (surface) must be a translation of the
parabola (paraboloid) along its axis of symmetry. This converse result was already known, at least, since the beginning of XIX century. Indeed, we can find the following statement of another problem of the
Cambridge examinations around 1830: \emph{A plane is so moved as always to cut off from a given paraboloid of revolution equal volumes; determine the equation to the surface to which it is always a tangent}
\cite[p. 38-40]{cook}.

Analogous results relative to the conservation of the area (volume) of the
segments cut off from an ellipse (ellipsoid), or from one of the branches (sheets) of an hyperbola (two-sheeted hyperboloid), by any straight line (plane) which is tangent to a certain similar conic (quadric), concentric and homothetic of the former can be also stated and proved. The converse results also hold; therefore, we have characterizations for these other conic sections (quadrics, of revolution or not).

With regard to the reciprocal result we note that, both two-dimensional and three-dimensional cases arise naturally in a geometrical-mechanic context which is not far from Archimedes. In particular, from his two books
\emph{On the floating bodies}, (FB).
Let us think of an homogeneous piece of wood which is paraboloid-shaped and that, due to its shape and the density of wood, can be floating in the water in equilibrium, with the vertex heading the bottom, and only partially submerged. When we move the piece a bit, the variable segment of paraboloid submerged in each position of equilibrium has a constant volume $V$. This is because (we neglect the weight of the air) the weight of the piece must be counterbalanced by the weight of the water displaced, according to the Archimedes' principle of
Hydrostatics.\footnote{\cite[p. 257-259: (FB), Book I, Propositions 5-7]{heath}.
Archimedes studies the stability of the flotation of a right segment of a paraboloid of revolution in Book II of (FB). } Each plane that cuts off from the floating body this volume $V$ is called a \emph{plane of flotation}. The planes of flotation envelop a surface called \emph{surface of flotation} \cite[p. 30-31]{dupin}.  The floating body [the ship] \emph{moves as if this surface rolls and slides on the plane surface of the water}, see \cite[p. 156]{green}.

The converse result of the above states that, while the plane of flotation does not attain the base of the segment of the floating paraboloid, the flotation surface will be a portion of a paraboloid, whose shape is the same as the shape of the floating paraboloid, translated along the axis of the latter.

Ch. Dupin showed in 1814 that the contact point of each plane of flotation with the surface of flotation is the center of gravity of the chordal section (we suppose that this section is homogeneous) intercepted in such plane by the exterior surface of the floating body. In other words, the surface of flotation is the locus of the centers of gravity of the associated chordal sections (see \cite[p. 231-232]{reed} and \cite[p. 125-127, \S{}47]{blas}).

When the outer shape of the floating body is an ellipsoid or a sheet of a two-sheeted hyperboloid, the surface of flotation is a concentric homothetic similar quadric. When the outer shape is a one-sheeted suitably truncated (i.e., the truncation leaves finite volumes) hyperboloid, the surface of flotation is a portion of a sheet of a concentric two-sheeted hyperboloid with the same asymptotic cone, homothetic to the complementary two-sheeted hyperboloid. The surface of flotation of a cone it is also a sheet of a two-sheeted hyperboloid. This is more familiar to us. When the outer shape of the floating body is a paraboloid, the surface of flotation is an identical paraboloid,
translated along the axis \cite[p. 27-35]{bravais} (see also \cite[p. 262, p. 295 ss., p. 343-346]{poldude} and \cite[p. 50-51]{besant}). It is also remarkable that J. Pollard and A. Dudebout \cite[p. 343-344]{poldude} give a proof by synthetic geometry of the qualitative (not metric) part of these results.

In the work by B. Richmond and T. Richmond \cite{richmond}, we find twelve results of characterization of a parabola. The aim of our work is to provide, in a multidimensional context, characterizations of generalized quadrics as hypersurfaces whose tangent planes cut off, from a certain similar hypersurface, compact sets of a constant volume. In this regard, in the present work we try to unify the direct and converse results found in the literature about conics and quadrics, and present them as characterization theorems of generalized quadrics, in a multidimensional context.

We will deal with the constant $(n+1)$-dimensional volumes ($n\ge 1$) of the compact sets cut off by $n$-dimensional hyperplanes from generalized quadrics (paraboloids, two-sheeted hyperboloids, and ellipsoids, but also cones and one-sheeted truncated hyperboloids) in the Euclidean space $E^{n+1}$. Such extensions are inspired by the paper \cite{mgolomb} of M. Golomb.

The procedure used in our proofs is based on $(n+1)$-tuple integration and it seems to be new, or at least, it has not been used in any of the revised results in this introduction.

Throughout the paper, we use the following definitions and notations. We write $(\boldsymbol{x},z)=(x_1,\ldots,x_n,z)$ to denote a generic point of $E^{n+1}$. Given a differentiable function $g$, defined on $E^{n+1}$, we will denote the partial derivatives as $g_{x_i}$, for $i=1,\ldots,n$, and $g_{z}$, when referring to the $(n+1)$-th coordinate. We will say that the nonempty set $\mathcal{M}\subset E^{n+1}$ is a $C^{(2)}$-surface if there exists an open set $A\subset E^{n+1}$ and a real-valued function $g\in C^{(2)}(A)$ such that $\sum_{i=1}^{n}\abs{g_{x_i}(\boldsymbol{x},z)}+\abs{g_z(\boldsymbol{x},z)}>0$ whenever $g(\boldsymbol{x},z)=0$, and $\mathcal{M}=g^{-1}(0)$ (see, for example, \cite[Th. 5-1]{spivak}). When  $g$ is a quadratic polynomial in the variables $x_1$, \ldots, $x_n$, $z$, the surface is called a \emph{quadric}. For example, if $\abs{a_i}>0$ for all $i=1,\ldots,n$, then $z=\sum_{i=1}^n a_i^2x_i^2$ is a \emph{paraboloid}, $z^2+\sum_{i=1}^n a_i^2x_i^2=1$ is an \emph{ellipsoid}, $z^2-\sum_{i=1}^n a_i^2x_i^2=-1$ is a \emph{one-sheeted hyperboloid}, and $z^2-\sum_{i=1}^n a_i^2x_i^2=1$ is the \emph{complementary} \emph{two-sheeted hyperboloid}, both with asymptotic cone $z^2-\sum_{i=1}^n a_i^2x_i^2=0$. The \emph{upper sheet} of the latter hyperboloid is the surface contained into the half-space $\{z\ge 1\}$.

When $\mathcal{M}$ is a $C^{(2)}$-surface
such that $\mathcal{M}=\partial K$, where the set $K\subset E^{n+1}$, with nonempty interior, is compact and convex, we will call $\mathcal{M}$ a $C^{(2)}$-\emph{ovaloid}.

Finally, by $V_m(\mathcal{S})$ we denote the $m$-dimensional volume of the compact set $\mathcal{S}$.

The structure of the paper is as follows. In Section \ref{sec:parabolas} we present the preliminary plane parabolic case ($n=1$). In the sequel sections we will show multidimensional extensions for the parabolic, two-sheeted hyperbolic, one-sheeted hyperbolic, conic, and elliptic cases. In the two-sheeted hyperbolic and elliptic cases we also extend the direct results proved by Archimedes in (CS). Some of the computations will be moved to a final appendix.


\section{Characterizing a parabola}
\label{sec:parabolas}

As we said in the introduction, the direct part of the following theorem is an immediate corollary of Proposition \ref{prop:3}. It was already presented by the first author in \cite{O}. In this section, we provide a characterization of the parabola. We mentioned earlier that other characterizations of the parabola were already supplied in \cite{richmond}. We denote by $(x,z)$ the rectangular plane coordinates.

\begin{teor} Let $p,k\in\mathbb{R}$ and $pk>0$. Every straight line tangent to the parabola
$\mathcal{P}'\colon z=p^2x^2+k^2$  cuts off from the parabola $\mathcal{P}\colon z=p^2x^2$ a parabolic segment  of constant
area equal to
$\frac{4}{3}\frac{k^3}{p}$   (Figure~\ref{fig:parabola1}).

\begin{figure}[ht]
\begin{center}
\includegraphics[scale=.8]{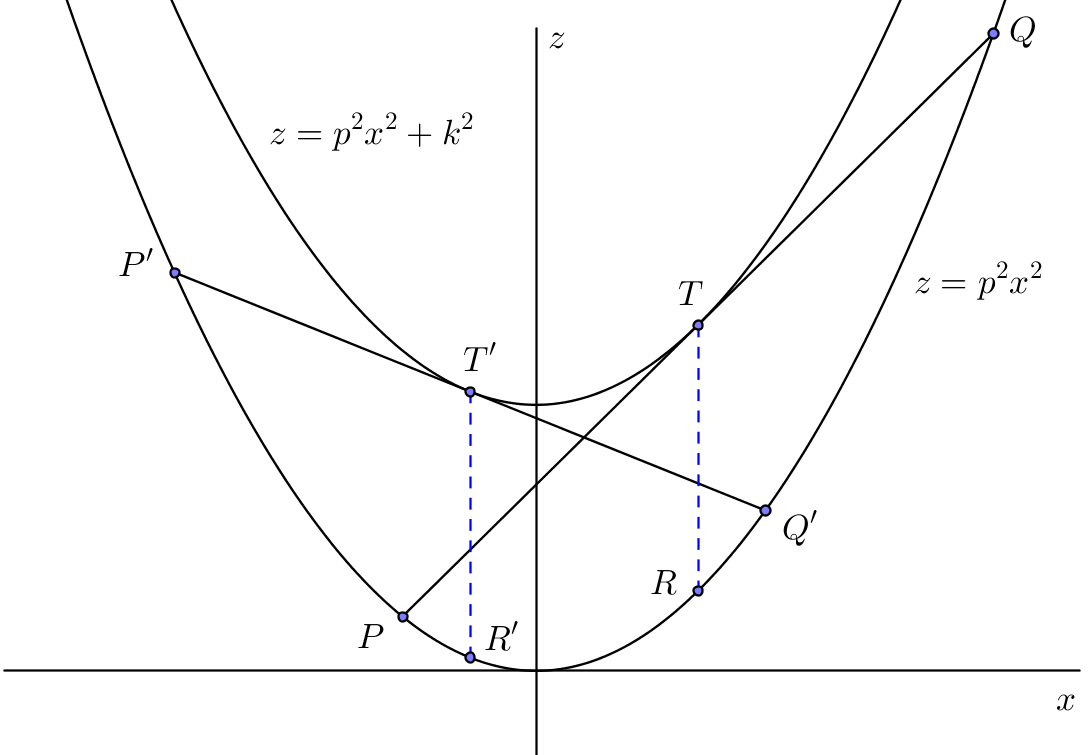}
\caption{The areas of the segments cut off from the parabola $z=p^2x^2$ by the chords $PQ$ and $P'Q'$, tangent to the translated parabola $z=p^2x^2+k^2$ respectively at the points $T$ and $T'$, are equal.}\label{fig:parabola1}
\end{center}
\end{figure}

Conversely, let $f\in C^{(2)}(\mathbb{R})$ be such that
$f(x)>p^2x^2$ for all $x\in\mathbb{R}$. If the tangent line  to the curve $z=f(x)$ at each one of its points cuts off from the parabola $\mathcal{P}$ a segment of constant area $A$, then $f(x)=p^2x^2+k^2$, where $k=\sqrt[3]{3pA/4}$.
\end{teor}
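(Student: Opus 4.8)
The plan is to handle the direct statement by a one‑line integral computation and the converse by reducing it to a first‑order ODE whose only admissible solution is the asserted parabola.

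For the direct part I would take the tangent to $\mathcal{P}'$ at a general point $(a,p^2a^2+k^2)$; it has slope $2p^2a$, so it is the line $z=p^2a^2+k^2+2p^2a(x-a)$, and substituting into $z=p^2x^2$ yields $p^2(x-a)^2=k^2$, i.e.\ the line meets $\mathcal{P}$ exactly at $x=a\pm k/p$ (and $k/p>0$ since $pk>0$). The area cut off is then $\int_{a-k/p}^{a+k/p}\bigl(k^2-p^2(x-a)^2\bigr)\,dx$, which after the change of variable $u=x-a$ equals $\tfrac{4}{3}k^3/p$ independently of $a$. (Alternatively this is immediate from Proposition~\ref{prop:3}.)

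For the converse, fix $a$ and consider the tangent line $\ell_a\colon z=f(a)+f'(a)(x-a)$ to $z=f(x)$. Since $f(a)>p^2a^2$ the line $\ell_a$ lies strictly above $\mathcal{P}$ at $x=a$, while it lies below $\mathcal{P}$ for $|x|$ large; hence $\ell_a$ meets $\mathcal{P}$ at exactly two points $x_1(a)<a<x_2(a)$, the roots of $p^2x^2-f'(a)x+(af'(a)-f(a))=0$. The area cut off from $\mathcal{P}$ is
\[
A(a)=\int_{x_1(a)}^{x_2(a)}\bigl(f(a)+f'(a)(x-a)-p^2x^2\bigr)\,dx
=-p^2\int_{x_1(a)}^{x_2(a)}(x-x_1)(x-x_2)\,dx=\frac{p^2}{6}\bigl(x_2(a)-x_1(a)\bigr)^3 ,
\]
and Vieta's formulas give $\bigl(x_2(a)-x_1(a)\bigr)^2=p^{-4}\bigl(f'(a)^2-4p^2(af'(a)-f(a))\bigr)$. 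Imposing $A(a)\equiv A$ forces $\bigl(x_2(a)-x_1(a)\bigr)^2$ to be constant, so $f$ satisfies the first‑order ODE $f'(a)^2-4p^2af'(a)+4p^2f(a)=C$ for all $a$, with $C=p^4(6A/p^2)^{2/3}$. Substituting $f(x)=p^2x^2+h(x)$, so that the hypothesis reads $h>0$, collapses this to
\[
h'(a)^2+4p^2h(a)=C\qquad(a\in\mathbb{R}),
\]
and differentiating yields $h'(a)\bigl(h''(a)+2p^2\bigr)=0$ for every $a$.

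The crux is to upgrade this pointwise dichotomy to $h'\equiv0$. I would argue that if $h'(a_0)<0$ for some $a_0$, then $h'$ cannot vanish for any $a>a_0$: up to a first such zero we would have $h''\equiv-2p^2$, so $h'(a)=h'(a_0)-2p^2(a-a_0)$, whose only root lies at a point $<a_0$. Hence $h''\equiv-2p^2$ on $[a_0,\infty)$, so $h(a)=h(a_0)+h'(a_0)(a-a_0)-p^2(a-a_0)^2\to-\infty$, contradicting $h>0$; the case $h'(a_0)>0$ is symmetric, running $a$ toward $-\infty$. Therefore $h'\equiv0$, i.e.\ $h\equiv k^2$ is a positive constant and $f(x)=p^2x^2+k^2$; plugging back into the displayed relation gives $4p^2k^2=C$, whence $x_2-x_1=2k/p$ and $A=\tfrac{4}{3}k^3/p$, i.e.\ $k=\sqrt[3]{3pA/4}$. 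I expect this final rigidity step to be the main obstacle; the intersection computation, the area formula, and the passage to the ODE are all routine.
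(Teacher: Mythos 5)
Your proof is correct, and on the converse it takes a genuinely different route from the paper. For the direct part the paper quotes Archimedes (Propositions \ref{prop:mainQP} and \ref{prop:3}), while you simply intersect the tangent with $\mathcal{P}$ and integrate; both are fine, and your computation $p^2(x-a)^2=k^2$, area $=\tfrac43 k^3/p$, is exact. For the converse, both arguments reach the same Clairaut-type relation (your $f'(a)^2-4p^2af'(a)+4p^2f(a)=C$ is equation \eqref{ec:ode} up to the normalization $p=1$ used in the paper's computation, which you avoid by carrying $p$ throughout). The paper then appeals to the classical theory of Clairaut's equation: the general solution is the family of lines, the only other solution is the singular integral (the envelope), and the latter is declared the only admissible one; the exclusion of the lines (which cannot satisfy $f(x)>p^2x^2$ on all of $\mathbb{R}$) and of $C^{(1)}$-pastings of line segments with envelope arcs is left implicit. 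You instead substitute $f=p^2x^2+h$, reduce to $h'^2+4p^2h=C$, differentiate to get $h'(h''+2p^2)=0$, and then give a self-contained rigidity argument (first-zero analysis plus the global constraint $h>0$ on $\mathbb{R}$) showing $h'\equiv 0$; this settles directly, under the stated $C^{(2)}$ and positivity hypotheses, exactly the point where the paper is terse, and your concluding identification $C=4p^2k^2$, $A=\tfrac43k^3/p$ is correct. What the paper's route buys is brevity and a template that scales to the multidimensional Clairaut partial differential equation used in the later sections (where your scalar first-zero argument would not carry over verbatim); what your route buys is an elementary, fully rigorous proof in the plane case with the general $p$ handled uniformly.
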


\begin{proof}

Let $T(\overline{x},\overline{z}+k^2)$ be any point of the parabola $\mathcal{P}'$.  Let $\mathcal{S}$ be the parabolic segment which the tangent line to $\mathcal{P}'$ at the point $T$ cuts off from the parabola $\mathcal{P}$. The \emph{vertex} of  $\mathcal{S}$ is the point  $R(\overline{x},\overline{z})$. We have $\abs{TR}=k^2$, a constant, and thus, by Proposition \ref{prop:3}, the area of  $\mathcal{S}$ is constant (independent of the point $T$). We may know the value of this constant by computing, for example, the area of the \emph{right} segment $\mathcal{S}_0$ corresponding to $T(0,k^2)$, using Proposition \ref{prop:mainQP}. Then we have (see Figure \ref{fig00})

\begin{figure}[ht]
\begin{center}
\includegraphics[scale=.8]{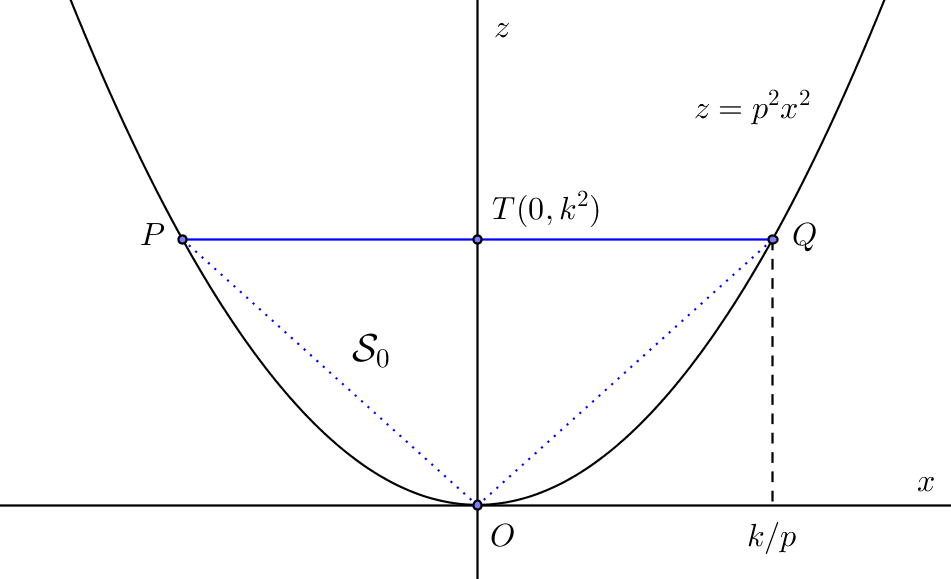}
\caption{}\label{fig00}
\end{center}
\end{figure}

$$
V_2(\mathcal{S})=\frac43\cdot\text{area$(\triangle POQ)$}=\frac{4}{3}\frac{k^3}{p}\,.
$$

Conversely, let us suppose that the straight line $\ell$, tangent to the curve $z=f(x)$ at a generic point $L(\overline{x},\overline{z})$, where $\overline{z}=f(\overline{x})$, cuts off from the parabola $\mathcal{P}$ a parabolic segment of constant  (independent of $\overline{x}$) area $A>0$. Let $k=\sqrt[3]{3A/4}$, and
write $f'(\overline{x})=\overline{m}$.

\begin{figure}[ht]
\begin{center}
\includegraphics[scale=.8]{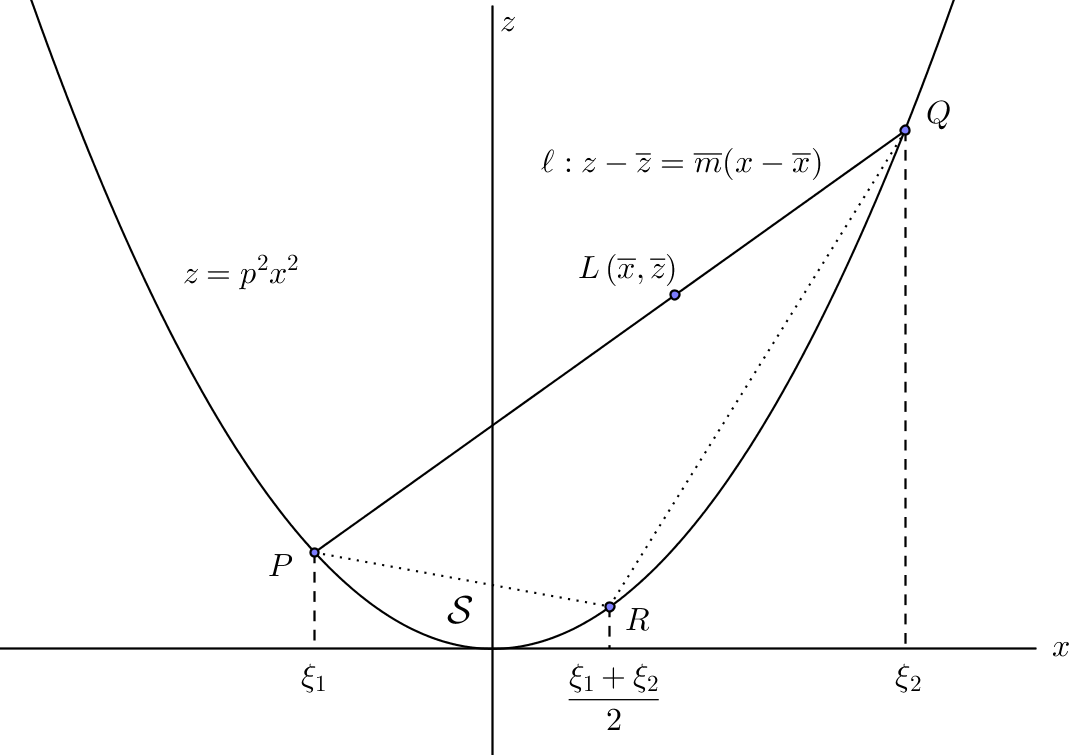}
\caption{The area of the  parabolic segment is $4/3$ of the area of $\triangle PRQ$.}\label{fig2}
\end{center}
\end{figure}

The condition $\overline{z}>p^2\overline{x}^2$ assures that the line $\ell$ meets $\mathcal{P}$ in two points $P$ and $Q$ (see Figure \ref{fig2}). Furthermore, if the absciss\ae{} of $P$ and $Q$ are  $\xi_1$ and $\xi_2$ ($\xi_2\ge \xi_1$), we have
$$
\xi_2-\xi_1=2\sqrt{\overline{z}-\overline{m}\,\overline{x}+\frac{\overline{m}^2}4}.
$$

The \emph{vertex} of the parabolic segment $\mathcal{S}$ which $\ell$ cuts off from $\mathcal{P}$ is the point $R\in \mathcal{P}$ which has abscissa $(\xi_2+\xi_1)/2=\overline{m}/2$. Then, the area of $\mathcal{S}$ is, according to  Proposition \ref{prop:mainQP},

\begin{align*}
A&=\frac23\cdot
\begin{vmatrix}
1&\xi_2 &\xi_2^2\\
1&\xi_1 &  \xi_1^2  \\
1&\frac{\xi_1+\xi_2}2 &
\big(\frac{\xi_1+\xi_2}2\big)^2
\end{vmatrix}    \\
&=\frac16(\xi_2-\xi_1)^3\\
&=\frac43\left(\overline{z}-\overline{m}\,\overline{x}+\frac{\overline{m}^2}4\right)^{3/2}.
\end{align*}

And, since $A=\frac{4}{3}k^3$, we arrive at the equation
$$
\overline{z}-\overline{m}\,\overline{x}+\frac{\overline{m}^2}4=k^2,
$$
or
\begin{equation}
\label{ec:ode}
f(\overline{x})-\overline{x}f'(\overline{x})+\frac14(f'(\overline{x}))^2-k^2=0,
\end{equation}
which must be satisfied by the unknown function $f(\overline{x})$ for all $\overline{x}\in\mathbb{R}$. This is a Clairaut's first order ordinary differential equation.

\begin{figure}[ht]
\begin{center}
\includegraphics[scale=.8]{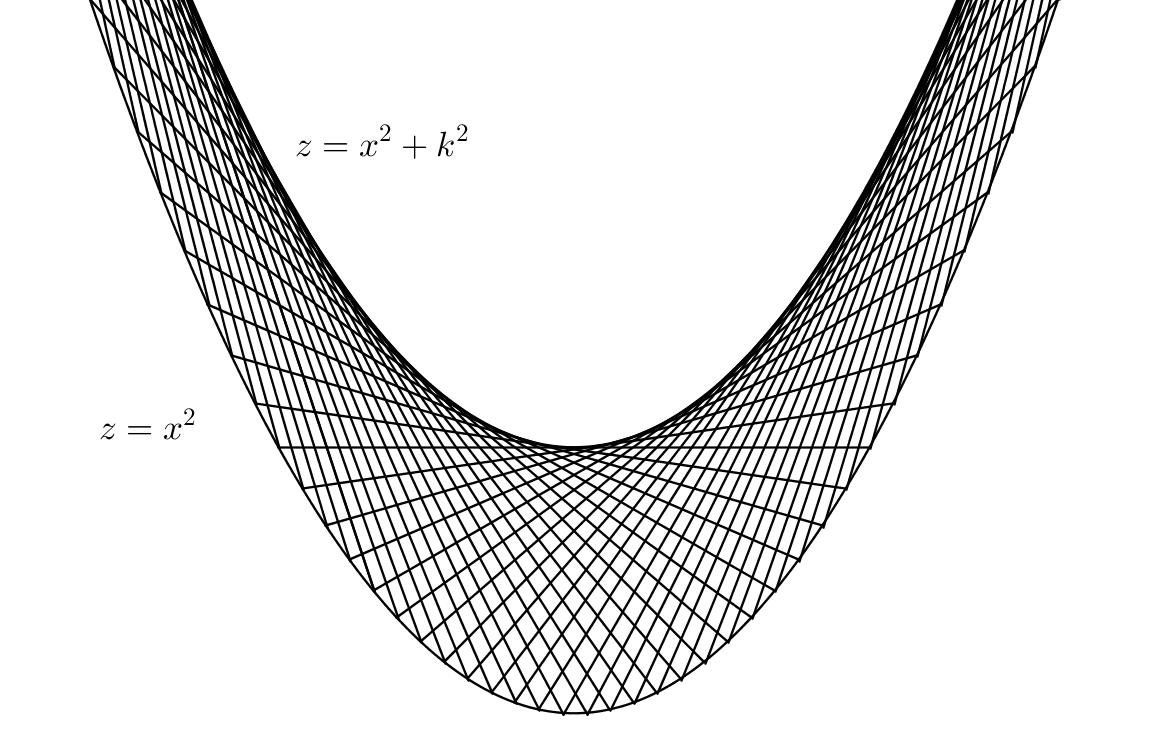}
\caption{The parabola $z=x^2$, some lines of the associated family $\phi(x,z,\lambda)=0$, and the envelope of this family of lines, the parabola  $z=x^2+k^2$.}\label{fig:parabola2}
\end{center}
\end{figure}

The one-parameter family of lines solution of \eqref{ec:ode}
\begin{equation}
\label{eq:familia}
\phi(x,z,\lambda):=
z-\lambda x+\frac{\lambda^2}{4}-k^2=0
\end{equation}
is called the \emph{general solution}. The differential equation admits only another solution, the \emph{singular integral}. This is the envelope of the family \eqref{eq:familia}, namely,
the equation which is obtained by eliminating $\lambda$ from the parametric equations
\[
\phi(x,z,\lambda)=0,\qquad  \frac{\partial{\phi(x,z,\lambda)}}{\partial \lambda}=0.
\]
Then, we get the translated parabola
\[
z=f(x)=x^2+k^2
\]
as the only valid solution for the converse statement (Figure~\ref{fig:parabola2}).
\end{proof}


\section{The general parabolic case}
\label{Sec:paraboloide}

The \emph{base} of an $(n+1)$-dimensional segment of paraboloid is the chordal section taken in the inner part of a paraboloid by the hyperplane that cuts off the paraboloid. In general, it will be an $(n+1)$-dimensional ellipsoid. The
\emph{vertex} is the contact point with the paraboloid of the hyperplane which is parallel to the \emph{base} and tangent to the paraboloid. The \emph{axis} is the straight segment which joins the vertex with the center of the ellipsoid in the base.
The lemma which follows can also be obtained by $n$-tuple integration \cite[p. 139-140]{mgolomb}.\footnote{In \cite[Th. 2]{mgolomb}, S. Golomb shows, establishing the $(n+1)$-dimensional extension of the above cited results, that the Archimedean  factors $4/3$ for $n=1$, and $3/2$ for $n=2$, generalize to $\frac{2(n+1)}{n+2}$.}

\begin{lema} \label{lema2}
Let $n\ge 1$. Let $(\overline{p_1},\ldots,\overline{p_n})\in\mathbb{R}^n$ and $P=(\overline{\boldsymbol{x}},\overline{z})\in\mathbb{R}^{n+1}$  be fixed.
Suppose that $\overline{z}> \sum_{i=1}^n\overline{x_i}^2$. The Euclidean volume of the compact set $\mathcal{S}\subset E^{n+1}$, cut off by the hyperplane
$
\Upsilon\colon z-\overline{z}=\sum_{i=1}^n\overline{p_i}(x_i-\overline{x_i})
$
from the paraboloid $z=\sum_{i=1}^n x_i^2$, is equal to
$
F_0\left(\zeta_P\right),
$
where $\zeta_P=\overline{z}-\sum_{i=1}^n\overline{p_i}\,\overline{x_i}+\sum_{i=1}^n\frac{\overline{p_i}^2}4$ and
\begin{equation}\label{efe0}
F_0(t)=\frac{\pi^{n/2}}{\Gamma(n/2+2)}\,t^{n/2+1}.
\end{equation}
In fact, this volume is equal to $\frac{2(n+1)}{n+2}$ times the volume of the cone $\mathcal{K}$ which has the same \emph{base} and the same \emph{axis} as $\mathcal{S}$.
\end{lema}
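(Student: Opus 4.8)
The plan is to realise $V_{n+1}(\mathcal{S})$ as an iterated integral in the style of \cite{mgolomb}: since $\mathcal{S}$ is precisely the region lying above the paraboloid and below the hyperplane $\Upsilon$, one has
\[
V_{n+1}(\mathcal{S})=\int_{D}\left(\overline{z}+\sum_{i=1}^n\overline{p_i}(x_i-\overline{x_i})-\sum_{i=1}^n x_i^2\right)d\boldsymbol{x},
\]
where $D\subset\mathbb{R}^n$ is the set on which the integrand is nonnegative. Completing the square in each variable, $x_i^2-\overline{p_i}x_i=(x_i-\overline{p_i}/2)^2-\overline{p_i}^2/4$, the integrand becomes $\zeta_P-\sum_{i=1}^n(x_i-\overline{p_i}/2)^2$, so the translation $u_i=x_i-\overline{p_i}/2$ turns the integral into $\int_{\abs{u}\le\sqrt{\zeta_P}}(\zeta_P-\abs{u}^2)\,d\boldsymbol{u}$. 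One first checks $\zeta_P>0$: indeed $\overline{z}>\sum\overline{x_i}^2$ and $\overline{x_i}^2-\overline{p_i}\,\overline{x_i}+\overline{p_i}^2/4=(\overline{x_i}-\overline{p_i}/2)^2\ge 0$ for each $i$, so the ball is genuine.

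Next I would pass to polar coordinates in $\mathbb{R}^n$. With $\omega_{n-1}=2\pi^{n/2}/\Gamma(n/2)$ denoting the surface measure of the unit sphere $S^{n-1}$,
\[
V_{n+1}(\mathcal{S})=\omega_{n-1}\int_0^{\sqrt{\zeta_P}}(\zeta_P-r^2)r^{n-1}\,dr=\omega_{n-1}\,\zeta_P^{n/2+1}\left(\frac1n-\frac1{n+2}\right)=\frac{2\omega_{n-1}}{n(n+2)}\,\zeta_P^{n/2+1},
\]
and since $n(n+2)\Gamma(n/2)=4\cdot\frac{n+2}{2}\cdot\frac n2\,\Gamma(n/2)=4\,\Gamma(n/2+2)$, this is exactly $F_0(\zeta_P)$ as in \eqref{efe0}. (One could equally well skip polar coordinates and recognise the last integral as a Beta-function moment over the ball.)

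For the comparison with the cone, the cleanest route is to apply the volume-preserving affine map $(\boldsymbol{x},z)\mapsto(u_1,\dots,u_n,z-\sum_i\overline{p_i}x_i)$ with $u_i=x_i-\overline{p_i}/2$. It has Jacobian determinant $1$, carries $z=\sum x_i^2$ to a \emph{right} paraboloid $z=\sum u_i^2+\mathrm{const}$, and carries $\Upsilon$ to a horizontal hyperplane, so it sends $\mathcal{S}$ to a \emph{right} segment and $\mathcal{K}$ to a right cone while preserving all the relevant volumes, as well as the notions of \emph{base} and \emph{axis} (the axis now being vertical of length $\zeta_P$). In this normalised picture the base is the honest $n$-ball of radius $\sqrt{\zeta_P}$, of $n$-volume $\pi^{n/2}\zeta_P^{n/2}/\Gamma(n/2+1)$, and the cone over it with apex at the vertex has volume $\frac{1}{n+1}\cdot\zeta_P\cdot\frac{\pi^{n/2}}{\Gamma(n/2+1)}\zeta_P^{n/2}=\frac{\pi^{n/2}}{(n+1)\Gamma(n/2+1)}\zeta_P^{n/2+1}$. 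Dividing $F_0(\zeta_P)=\frac{2\pi^{n/2}}{(n+2)\Gamma(n/2+1)}\zeta_P^{n/2+1}$ by this quantity yields the factor $\frac{2(n+1)}{n+2}$.

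I expect the only delicate point to be the cone statement. Carried out directly in the tilted hyperplane it requires computing the $n$-volume of the base (an ellipsoid of revolution, not a ball, acquiring a factor $\sqrt{1+\abs{\overline{p}}^2}$ from the slope) and the distance from the vertex to $\Upsilon$ (which contributes a compensating factor $1/\sqrt{1+\abs{\overline{p}}^2}$), and then checking that these cancel; the affine-shear reduction above makes that cancellation automatic, which is why I would organise the proof that way. The volume formula itself is a routine one-variable integral once the square has been completed.
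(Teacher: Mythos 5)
Your proof is correct, and the two computations of the ratio agree with the paper's; the difference lies in how the main volume is organised. The paper first performs the unit-Jacobian shear (the affine change \eqref{eq10}-style map of its Lemma \ref{lema2} proof) and then slices $\mathcal{S}$ by hyperplanes parallel to $\Upsilon$, so that each slice $C(\zeta)$ is an $n$-ball of radius $\zeta^{1/2}$ and $V_{n+1}(\mathcal{S})=\alpha_n(1)\int_0^{\zeta_P}\zeta^{n/2}\,d\zeta$; you instead integrate the vertical gap between $\Upsilon$ and the paraboloid over the projection onto $\boldsymbol{x}$-space, complete the square, and pass to polar coordinates. These are the two Fubini orders for the same region and give identical one-line integrals, so the gain is mostly organisational: your route never needs to identify the transformed slices, while the paper's slicing template is the one it reuses verbatim for the hyperboloid, cone and ellipsoid lemmas later (Lemmas \ref{lema3}, \ref{lema3bis}, \ref{lema3ter}, \ref{lema4}), where a ``project and integrate the gap'' computation would be less convenient because the sections there are ellipsoids with a nontrivial determinant factor. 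For the Archimedean ratio you and the paper do essentially the same thing: your normalising map $(\boldsymbol{x},z)\mapsto(\boldsymbol{u},z-\sum_i\overline{p_i}x_i)$ is (up to a translation) the inverse of the paper's shear, and your appeal to the base-times-height-over-$(n+1)$ formula for the right cone is exactly what the paper derives on the spot from the similarity relation $V_n^*(K(\zeta))/V_n^*(C(\zeta_P))=(\zeta/\zeta_P)^n$; your remark that the affine map carries base, vertex, axis and cone to their right counterparts (centers of ellipsoids and tangency points being affine notions) is the justification needed, and it is sound.
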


\begin{proof}
The volume of $\mathcal{S}$ is given by the $(n+1)$-tuple integral
\begin{equation*}
V_{n+1}(\mathcal{S})=\int_{\mathcal{S}} \,d\boldsymbol{x}\,dz.
\end{equation*}

Change the variables according to the following affine transformation (see Figure \ref{paraboloide2})
\begin{equation}
\begin{pmatrix} x_1\\ x_2\\\vdots\\ x_n\\
z\end{pmatrix}=
\begin{pmatrix} \frac{\overline{p_1}}2\\ \frac{\overline{p_2}}2\\\vdots\\ \frac{\overline{p_n}}2\\
\frac{\sum_{1=1}^n\overline{p_i}^2}4\end{pmatrix}+
\begin{pmatrix}
1&0&\cdots&0&0\\
0&1&\cdots&0&0\\
\vdots&\vdots&\ddots&\vdots&\vdots\\
0&0&\cdots&1&0\\
\overline{p_1}&\overline{p_2}&\cdots&\overline{p_n}&1\end{pmatrix}
\begin{pmatrix}
y_1\\y_2\\\vdots\\y_n\\\zeta
\end{pmatrix}.
\end{equation}

\begin{figure}[ht]
\begin{center}
\includegraphics[scale=.8]{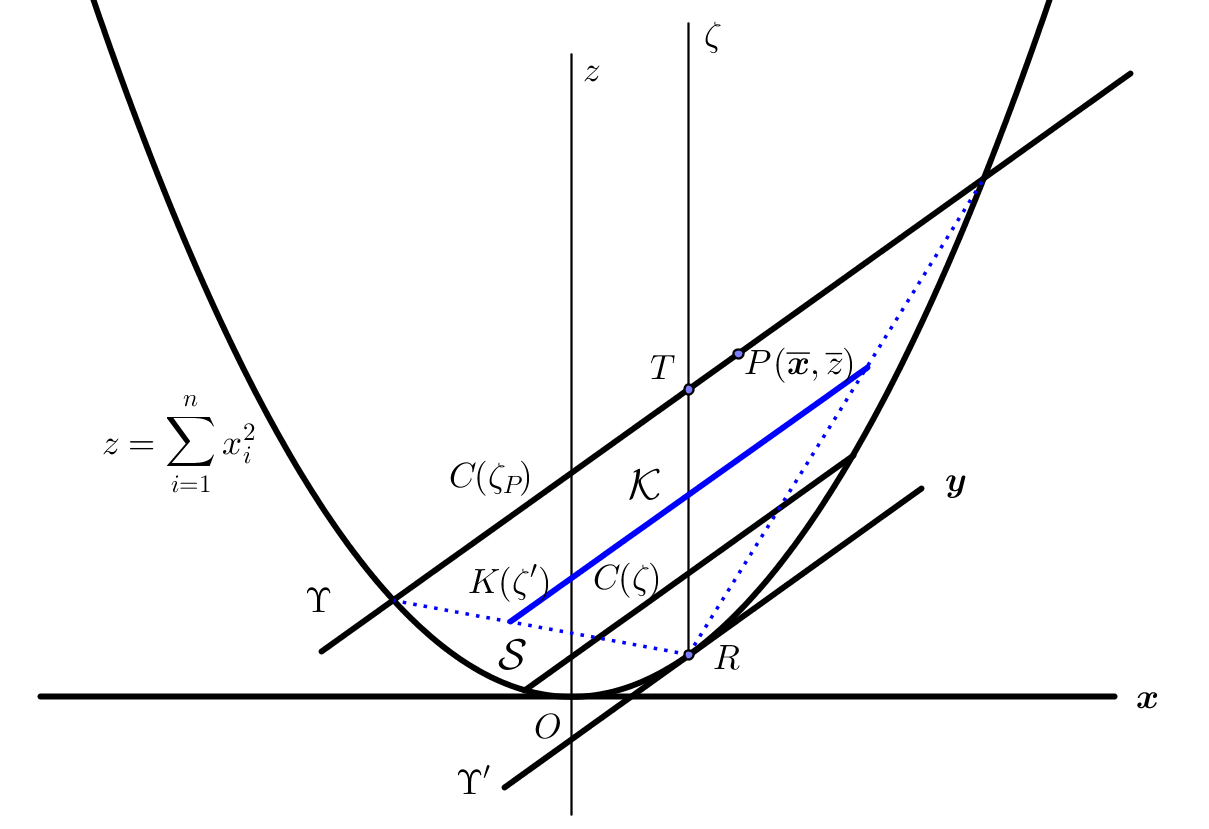}
\caption{The two-dimensional plane defined by the axis of symmetry of the paraboloid and  the normal vector of the hyperplane~$\Upsilon$ (a scheme).} \label{paraboloide2}
\end{center}
\end{figure}

The equation of the hyperplane $\Upsilon$ in the new variables is $\zeta=\zeta_P$ and the equation of the paraboloid turns in $\zeta=\sum_{i=1}^n y_i^2$, so that, for each fixed value of $\zeta$, the chordal section $C(\zeta)$ in the transformed $(\boldsymbol{y},\zeta)$-domain $\mathcal{S}^*$ is a $n$-sphere of radius $\zeta^{1/2}$.

The $\zeta$-coordinate is $0$ for the hyperplane $\Upsilon'$ parallel to $\Upsilon$ and which is tangent to the paraboloid at the point $R$, origin of the $(\boldsymbol{y},\zeta)$-coordinates.

The Jacobian of the transformation is
$\frac{\partial(\boldsymbol{x},z)}{\partial(\boldsymbol{y},\zeta)}=1$.
On the other hand, the $n$-dimensional volume of the $n$-sphere of radius $r>0$ is given by the formula
\begin{equation}
\label{eq:volumen}
\alpha_n(r)=\frac{\pi^{n/2}}{\Gamma(n/2+1)}\cdot r^n.
\end{equation}
Then, writing $V_n^*(L)$ for the $n$-volume of the region $L$ in the $(\boldsymbol{y},\zeta)$-space,
\begin{align*}
V_{n+1}(\mathcal{S})&=\int_{\mathcal{S}^*} \,d\boldsymbol{y}\,d\zeta=\int_0^{\zeta_P} V_n^*(C(\zeta))\,d\zeta=\alpha_n(1)\int_0^{\zeta_P}\zeta^{n/2}\,d\zeta\\
&=\frac{2\pi^{n/2}}{(n+2)\Gamma(n/2+1)}\,\zeta_P^{n/2+1} =F_0(\zeta_P),
\end{align*}
as stated.

The Archimedean cone $\mathcal{K}$ has base $C(\zeta_P)$ and vertex $R$. We have
\begin{equation*}
V_{n+1}(\mathcal{K})=\int_{\mathcal{K}} \,d\boldsymbol{x}\,dz=\int_{\mathcal{K}^*} \,d\boldsymbol{y}\,d\zeta=\int_0^{\zeta_P} V_n^*(K(\zeta))\,d\zeta,
\end{equation*}
where $K(\zeta)$ are the $n$-dimensional sections of $\mathcal{K}$ which are parallel to the base $C(\zeta_P)=K(\zeta_P)$. From elementary geometry we have
\[
\frac{V_n^*(K(\zeta))}{V_n^*(C(\zeta_P))}=\Bigl(\frac{\zeta}{\zeta_P}\Bigr)^n,
\]
so that
\begin{align*}
V_{n+1}(\mathcal{K})&=V_n^*(C(\zeta_P))\frac{\zeta_P}{n+1}=\frac{\pi^{n/2}}{\Gamma(n/2+1)}\,\zeta_P^{n/2}\cdot\frac{\zeta_P}{n+1}\\
&=\frac{\pi^{n/2}}{(n+1)\Gamma(n/2+1)}\,\zeta_P^{n/2+1},
\end{align*}
and this yields
\begin{equation*}
V_{n+1}(\mathcal{S})=\frac{2(n+1)}{n+2}\cdot V_{n+1}(\mathcal{K}),
\end{equation*}
(already stated by Archimedes for $n=1$ and $n=2$).
\end{proof}

The direct case of the next result for $n=1$ and $n=2$ (and for a paraboloid of revolution) is due to Archimedes, as we noted above.
And the converse result, for $n=2$, is equivalent to find the \emph{surface of flotation} of a floating  paraboloid. A.~Bravais
\cite[p. 34-35]{bravais} already showed that it was an identical paraboloid.

\begin{teor}
\label{theor:par}
Let $n\ge 1$. Let $p_1$, \ldots, $p_n>0$,
and $k>0$ be fixed, and $p(\boldsymbol{x})=\sum_{i=1}^n p_i^2x_i^2$ for all
$\boldsymbol{x}\in\mathbb{R}^n$. Every hyperplane tangent to the paraboloid
$z=p(\boldsymbol{x})+k^2$ cuts off from the paraboloid $z=p(\boldsymbol{x})$ a $(n+1)$-dimensional compact set $\mathcal{S}$ of constant Euclidean volume equal to $F(k^2)=p_1\cdots
p_n F_0(k^2)$.

Conversely, let $f\in C^{(2)}(\mathbb{R}^n)$ be a real-valued function such that
\begin{equation}
\label{eq:condmul1}
f(\boldsymbol{x})> p(\boldsymbol{x})
\quad \text{ for all $\boldsymbol{x}\in\mathbb{R}^n$.}
\end{equation}
If every hyperplane tangent to the $C^{(2)}$-surface $z=f(\boldsymbol{x})$ cuts off from
the paraboloid $z=p(\boldsymbol{x})$
a compact set of constant $(n+1)$-dimensional Euclidean volume $V$,  then $f(\boldsymbol{x})=p(\boldsymbol{x})+k^2$, where $k^2=F^{-1}(V)$.
\end{teor}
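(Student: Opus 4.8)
The plan is to transport the one-dimensional argument of Section~\ref{sec:parabolas}, with Lemma~\ref{lema2} playing the role of Archimedes' quadrature. For the direct part I would first normalize the paraboloid by the diagonal linear change of variables $u_i=p_ix_i$ (leaving $z$ unchanged), of Jacobian $p_1\cdots p_n$: it carries $z=p(\boldsymbol{x})$ to the standard paraboloid $z=\sum_i u_i^2$, carries $z=p(\boldsymbol{x})+k^2$ to $z=\sum_i u_i^2+k^2$, and --- being affine, hence preserving hyperplanes and tangency --- carries a hyperplane tangent to the latter at $(\overline{\boldsymbol{x}},p(\overline{\boldsymbol{x}})+k^2)$ to one tangent to $z=\sum_i u_i^2+k^2$. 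In the $\boldsymbol{u}$-picture that hyperplane has slope vector $2\overline{\boldsymbol{u}}$ with $\overline{u_i}=p_i\overline{x_i}$, so the number $\zeta_P$ of Lemma~\ref{lema2} equals $\bigl(\sum_i\overline{u_i}^2+k^2\bigr)-2\sum_i\overline{u_i}^2+\sum_i\overline{u_i}^2=k^2$; Lemma~\ref{lema2} then gives the volume $F_0(k^2)$ in the $\boldsymbol{u}$-picture, and undoing the change of variables turns this into $F(k^2)$, independent of the contact point, which is the assertion.

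For the converse, fix $\overline{\boldsymbol{x}}\in\mathbb{R}^n$, put $\overline{m_i}=f_{x_i}(\overline{\boldsymbol{x}})$, and consider the tangent hyperplane $z-f(\overline{\boldsymbol{x}})=\sum_i\overline{m_i}(x_i-\overline{x_i})$. Completing squares, its intersection with $z=p(\boldsymbol{x})$ is the ellipsoid $\sum_i p_i^2\bigl(x_i-\overline{m_i}/(2p_i^2)\bigr)^2=\zeta_P$, where $\zeta_P=f(\overline{\boldsymbol{x}})-\sum_i\overline{x_i}\,\overline{m_i}+\sum_i\overline{m_i}^2/(4p_i^2)=\bigl(f(\overline{\boldsymbol{x}})-p(\overline{\boldsymbol{x}})\bigr)+\sum_i\bigl(p_i\overline{x_i}-\overline{m_i}/(2p_i)\bigr)^2>0$ by hypothesis \eqref{eq:condmul1}; so the cut-off set is a genuine compact body of positive volume. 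Running the same diagonal change of variables together with Lemma~\ref{lema2} as before, this volume equals $F(\zeta_P)$. Since $F$ is a strictly increasing bijection of $[0,\infty)$ (clear from \eqref{efe0}) and the cut-off volume is the constant $V>0$, we get $\zeta_P=F^{-1}(V)=:k^2$ for every $\overline{\boldsymbol{x}}$; dropping bars, $f$ must satisfy the Clairaut-type first order partial differential equation
\[
f(\boldsymbol{x})-\sum_{i=1}^n x_i f_{x_i}(\boldsymbol{x})+\sum_{i=1}^n\frac{f_{x_i}(\boldsymbol{x})^2}{4p_i^2}=k^2,\qquad \boldsymbol{x}\in\mathbb{R}^n.
\]

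Solving this equation is the step I expect to be the main obstacle. Its complete integral is the $n$-parameter family of hyperplanes $z=\langle\boldsymbol{\lambda},\boldsymbol{x}\rangle+k^2-\sum_i\lambda_i^2/(4p_i^2)$; eliminating $\boldsymbol{\lambda}$ between this family and its $\lambda$-derivatives (which give $\lambda_i=2p_i^2x_i$) yields exactly $z=p(\boldsymbol{x})+k^2$ as the singular solution. The hyperplanes of the complete integral are excluded a priori, since by \eqref{eq:condmul1} the graph of $f$ lies above the paraboloid $z=p(\boldsymbol{x})$ and an affine function cannot majorize the positive-definite quadratic $p$ on all of $\mathbb{R}^n$; what remains is to rule out other solutions. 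For a self-contained argument I would put $h=f-p>0$: the equation then collapses, after the cross terms cancel, to $h+\sum_i h_{x_i}^2/(4p_i^2)=k^2$, and after the further scaling $w_i=2p_ix_i$ to the eikonal-type equation $h+|\nabla h|^2=k^2$, whence $0<h\le k^2$. On the open set $U=\{h<k^2\}$ the function $g=\sqrt{k^2-h}$ is of class $C^{(2)}$ and satisfies $|\nabla g|\equiv\tfrac12$; differentiating this identity gives $(D^2g)\,\nabla g\equiv 0$, so the integral curve of $\nabla g$ through any point of $U$ is a straight line along which $g$ grows linearly --- it can neither remain in $U$ forever ($g$ is bounded by $k$ there) nor reach $\partial U$ (where $g=0$, while $g$ has strictly increased), a contradiction. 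Hence $U=\emptyset$, i.e.\ $h\equiv k^2$ and $f(\boldsymbol{x})=p(\boldsymbol{x})+k^2$ with $k^2=F^{-1}(V)$.
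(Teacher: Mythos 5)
Your proposal is correct, and its first two thirds follow the paper's own route: a diagonal linear substitution reduces to the normalized paraboloid, Lemma~\ref{lema2} gives the cut-off volume as $F_0(\zeta_P)$ with $\zeta_P=k^2$ for the direct part, and in the converse the constancy of the volume plus injectivity of $F_0$ forces $\zeta_P$ to be constant, i.e.\ the Clairaut equation \eqref{eq:claigen2} (your version with general $p_i$ is the same equation before normalization; your verification that $\zeta_P>0$ under \eqref{eq:condmul1} is a nice explicit check that Lemma~\ref{lema2} applies). Where you genuinely depart from the paper is the uniqueness step. The paper appeals to the classical theory of Clairaut/complete-integral equations: the solutions are the hyperplanes \eqref{eq:intcom2}, the envelopes of $r$-parameter subfamilies, and the singular integral, and the non-paraboloid solutions are discarded in the subsequent Remark by the geometric observation that such envelopes are ruled by flats which must cross the paraboloid, violating \eqref{eq:condmul1}. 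You instead give a self-contained analytic argument: the substitution $h=f-p$ does cancel the cross terms (I checked), a rescaling turns the equation into the eikonal-type identity $h+\abs{\nabla h}^2=k^2$, and on $U=\{h<k^2\}$ the function $g=\sqrt{k^2-h}$ has $\abs{\nabla g}\equiv\tfrac12$, so its gradient trajectories are straight lines along which $g$ increases at a fixed rate, which is incompatible both with $g<k$ on $U$ and with $g=0$ on $\partial U$; hence $U=\emptyset$ and $h\equiv k^2$. This buys a complete, elementary uniqueness proof from \eqref{eq:condmul1} alone, avoiding the paper's reliance on the classification of all solutions of the Clairaut equation and its somewhat informal exclusion of the enveloped ruled surfaces, at the cost of a less geometric picture (the envelope/flotation interpretation that motivates the paper). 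One small shared blemish, inherited from the statement rather than introduced by you: with the change of variables $u_i=p_ix_i$ the cut-off volume for $z=p(\boldsymbol{x})$ is $F_0(k^2)/(p_1\cdots p_n)$, consistent with the value $\tfrac43 k^3/p$ of the planar theorem, so the factor in $F(k^2)=p_1\cdots p_nF_0(k^2)$ is inverted; your phrase ``undoing the change of variables'' glosses over this exactly as the paper's proof does, and it does not affect the argument since only monotonicity of $F$ is used.
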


\begin{proof}
Without loss of generality we can consider $p_i=1$ for all $i=1,\ldots,n$. The linear transformation $x_i=p_iX_i$ ($i=1,\ldots,n$), $z=Z$ turns the paraboloids $z=\sum_{i=1}^nx_i^2$ and $z=\sum_{i=1}^nx_i^2+k^2$, respectively, into the paraboloids $Z=p(\boldsymbol{X})$ and $Z=p(\boldsymbol{X})+k^2$ and the theorem follows from this special case.

For the direct result, just consider $\Upsilon$
the hyperplane tangent to the paraboloid $z=\sum_{i=1}^nx_i^2+k^2$ at the generic point
$T(\overline{\boldsymbol{x}},\overline{z})$,
$$
\Upsilon\colon z=\overline{z}+2\sum_{i=1}^n \overline{x_i}(x_i-\overline{x_i})=k^2+2\sum_{i=1}^n \overline{x_i} x_i-\sum_{i=1}^n \overline{x_i}^2.
$$
By applying Lemma \ref{lema2} (we have in this case $\overline{p_i}=2\overline{x_i}$ and $\overline{z}=\sum_{i=1}^n \overline{x_i}^2+k^2$; the $(\boldsymbol{x},z)$-coordinates of the point $R$ are $(\overline{\boldsymbol{x}},\overline{z}-k^2)$), it results that the Euclidean volume of the compact set $\mathcal{S}$ is $F_0(k^2)$, as stated (observe that, for example, $V_3(\mathcal{S})=\frac\pi2\,k^4$).

Conversely, assume that
every hyperplane tangent to the $C^{(2)}$-surface unknown $z=f(\boldsymbol{x})$ cuts off from
the paraboloid $z=\sum_{i=1}^nx_i^2$,
a compact set of constant $(n+1)$-dimensional Euclidean volume $V$. Let $k^2=F_0^{-1}(V)$.

Let $P=(\overline{\boldsymbol{x}},\overline{z})$ ($\overline{z}=f(\overline{\boldsymbol{x}})$) be an arbitrary point of the surface $\mathcal{M}\colon z=f(\boldsymbol{x})$. We are assuming that $\sum_{i=1}^n\abs{f_{x_i}(\overline{\boldsymbol{x}})}\ne 0$ and $f(\overline{\boldsymbol{x}})>\sum_{i=1}^n\overline{x_i}^2$. The tangent hyperplane to $\mathcal{M}$ at the point
$P$ is
$$
\Upsilon_P\colon z-\overline{z}=\sum_{i=1}^n\overline{p_i}(x_i-\overline{x_i}),
$$
where we have written $f_{x_i}(\overline{\boldsymbol{x}})=\overline{p_i}$ for $i=1,\ldots,n$.   By applying Lemma \ref{lema2} it results that
$$
V=F_0\left(\overline{z}-\sum_{i=1}^n\overline{p_i}\,\overline{x_i}+\sum_{i=1}^n\frac{\overline{p_i}^2}4\right).
$$
But we had also
$V=F_0(k^2)$. As $F_0$ is one-to-one, we get
\begin{gather}
\overline{z}=\sum_{i=1}^n\overline{p_i}\,\overline{x_i}+k^2-\sum_{i=1}^n\frac{\overline{p_i}^2}4.
\label{eq:claigen}
\end{gather}

In this manner, since $\overline{\boldsymbol{x}}$ runs over $\mathbb{R}^n$, the condition which we have obtained for the unknown $C^{(2)}$-surface
$z=f(\boldsymbol{x})$ is the Clairaut's first order partial differential equation
\begin{equation}
f(\boldsymbol{x})=\sum_{i=1}^n f_{x_i}(\boldsymbol{x})\,x_i+k^2-\frac14\sum_{i=1}^n\bigl(f_{x_i}(\boldsymbol{x})\bigr)^2, \label{eq:claigen2}
\end{equation}
of which the \emph{complete integral}\footnote{See, for example,
\cite[p. 94-95]{courant}. A $n$-parameter family $z=\phi(x_1,\ldots,x_n;a_1,\ldots, a_n)$ of solutions of a first order partial differential equation is called a \emph{complete integral} of the equation in a region of the $(x_1,\ldots,x_n)$-space if in the region considered the rank of the matrix
$$
M=\begin{pmatrix}
\phi_{a_1}&\phi_{x_1a_1}&\cdots&\phi_{x_na_1}\\
\phi_{a_2}&\phi_{x_1a_2}&\cdots&\phi_{x_na_2}\\
\vdots&\vdots&\ddots&\vdots\\
\phi_{a_n}&\phi_{x_1a_n}&\cdots&\phi_{x_na_n}
\end{pmatrix}
$$
is $n$. In our case, the matrix $M$ is the identity matrix for all  $\boldsymbol{x}\in\mathbb{R}^n$.
}
is formed by the $n$-parameter family of hyperplanes
\begin{equation}
\label{eq:intcom2}
z=\sum_{i=1}^n a_ix_i + k^2-\frac14\sum_{i=1}^n a_i^2.
\end{equation}

The singular integral of  \eqref{eq:claigen2} is the translated paraboloid $z=\sum_{i=1}^n x_i^2+k^2$, envelope of the $n$-parameter family of hyperplanes \eqref{eq:intcom2}. It is obtained by eliminating the $n$ parameters $a_i$ from the equations
\[
\left\{\begin{array}{l}
z=\sum_{i=1}^n a_ix_i + k^2-\frac14\sum_{i=1}^n a_i^2,\\[4pt]
x_i=\frac {a_i}2, \quad i=1,\ldots,n.
\end{array}\right.
\]
\end{proof}

\begin{rem}
Apart from the complete integral and the paraboloid $z=\sum_{i=1}^n x_i^2+k^2$, equation \eqref{eq:claigen2} also admits as solution (see \cite[p. 103-105]{courant}) the envelope of any arbitrary $r$-parameter subfamily of hyperplanes
($1\le r\le n-1$) selected from \eqref{eq:intcom2} by means of arbitrary functions ($a_{i}=\varphi(t_1,\ldots,t_r)$ for $i=1,\ldots,n$, say). But these type of envelopes are, in general, $n$-manifolds \emph{generated by} linear $(n-r)$-manifolds (or $(n-r)$-\emph{flats}, see \cite[p. 39]{gupta}) that will pass through the paraboloid $z=\sum_{i=1}^n x_i^2$, and will not verify the stated condition \eqref{eq:condmul1}

In particular, for $n=2$ ($\boldsymbol{x}=(x,y)$), a \emph{complete integral} of \eqref{eq:claigen2} in $\mathbb{R}^2$ consists of the two-parameter family of planes
\begin{equation}
\label{eq:intcom}
z=ax+by+k^2-\frac{a^2}4-\frac{b^2}4.
\end{equation}

The envelope of any one-parameter subfamily of \eqref{eq:intcom} obtained considering, say, $b=\varphi(a)$ where $\varphi$ is any function twice continuously differentiable, that is, the surface defined by
\[
\left\{\begin{array}{l}
z=ax+y\varphi(a)+k^2-\frac{a^2}4-\frac{(\varphi(a))^2}4,\\[4pt]
0=x+y\varphi'(a)-\frac{a}2-\frac{\varphi(a)\varphi'(a)}2,
\end{array}\right.
\]
is also a solution of  \eqref{eq:claigen2}. These envelopes form in this case the \emph{general solution} of the equation. All these surfaces are developable, that is, ruled surfaces (in general, cylinders, cones or tangential developables, see for example \cite[p. 204-208]{geo}) with the same tangent plane along each one of its ruling lines. Anyway, the rulings generating these types of surfaces will pass through the paraboloid $z=x^2+y^2$, and the stated condition \eqref{eq:condmul1} will be not verified. On the other hand, surfaces defined in all $\mathbb{R}^2$ formed by pasting pieces of the types above, can be inside the paraboloid $z=x^2+y^2$, but  will fail the smoothness condition along the lines of paste.

The \emph{singular integral}, envelope of the two-parameter family of planes \eqref{eq:intcom}, obtained by eliminating both parameters $a$ and $b$ from the equations
\[
\left\{\begin{array}{l}
z=ax+by+k^2-\frac{a^2}4-\frac{b^2}4,\\[4pt]
x=\frac a2,\\[4pt]
y=\frac b2,
\end{array}\right.
\]
is the translated paraboloid $z=x^2+y^2+k^2$, the only valid solution of the converse statement when $n=2$.
\end{rem}

\section{Hyperbolic surfaces and cones }
In this section we analyze the surfaces of flotation for two-sheeted hyperboloids, one-sheeted hyperboloids, and also for the cones. In all the cases we use a similar idea to obtain the volume of the region limited by the corresponding quadric and some hyperplanes.

\subsection{The general two-sheeted hyperbolic case}

As in the case of the paraboloid, Archimedes defined the
\emph{vertex} of a hyperboloidal segment as the contact point with the hyperboloid of the hyperplane which is parallel to the \emph{base} and tangent to the hyperboloid, see Figure~\ref{fig:hipermulti4}. The \emph{axis} of the segment is the portion cut off within the segment from the line drawn through the \emph{vertex} of the segment and the vertex of the asymptotic cone enveloping the hyperboloid, i.e., the center of the hyperboloid. The \emph{semidiameter} of the hyperboloid through $R$  is the distance between such vertices \cite[p. 101]{heath}.
Archimedes proves in (CS) (see \cite[p. 136-140: (CS), Propositions 25-26]{heath}):
\begin{prop}
\label{prop:25-26}
If $R$ is the vertex and $\abs{RT}$ the length of the axis of any segment cut off by a plane from a sheet of a two-sheeted hyperboloid of revolution, and if $\abs{OR}$ is the semidiameter of the hyperboloid through $R$, with $OR$ in the same straight line of $RT$, then
\[
\frac{V(\mathcal{S})}{V(\mathcal{K})}\\
=\frac{\abs{RT}+3\abs{OR}}{\abs{RT}+2\abs{OR}},
\]
where $V(\mathcal{S})$ and $V(\mathcal{K})$ denote, respectively, the volumes of the segment and of the cone or segment of a cone with the same base and axis.
\end{prop}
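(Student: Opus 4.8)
The plan is to run the argument of Lemma~\ref{lema2} with the paraboloid replaced by the upper sheet of a two-sheeted hyperboloid of revolution, carrying the computation out in a general $E^{n+1}$ and specializing to $n=2$ (Archimedes' case) only at the very end. After an affine scaling of the coordinates $x_i$ we may take the hyperboloid to be $\mathcal{H}\colon z^2-\abs{\boldsymbol{x}}^2=1$ with $\abs{\boldsymbol{x}}^2=\sum_{i=1}^n x_i^2$, whose centre is the origin $O$, and we write the cutting hyperplane as $\Upsilon\colon z=c+\boldsymbol{p}\cdot\boldsymbol{x}$; for $\Upsilon$ to cut off a compact segment $\mathcal{S}$ from the sheet one needs $\abs{\boldsymbol{p}}<1$, i.e.\ the normal of $\Upsilon$ lies inside the asymptotic cone. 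Exactly as in Lemma~\ref{lema2}, apply the shear $x_i=y_i$, $z=\zeta+\boldsymbol{p}\cdot\boldsymbol{y}+c_0$, which has Jacobian $1$, choosing the constant $c_0$ so that the hyperplane $\Upsilon'$ parallel to $\Upsilon$ and tangent to the sheet at the vertex $R$ of $\mathcal{S}$ becomes $\zeta=0$ while $\Upsilon$ becomes $\zeta=\zeta_P$. Then $V_{n+1}(\mathcal{S})=\int_0^{\zeta_P}V_n^*(C(\zeta))\,d\zeta$, where $C(\zeta)$ is the section of $\mathcal{H}$ at level $\zeta$ in the new coordinates.

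The first task is to evaluate $V_n^*(C(\zeta))$. Substituting the shear into $z^2-\abs{\boldsymbol{x}}^2=1$ and fixing $\zeta$, the section $C(\zeta)$ is cut out by a quadratic equation in $\boldsymbol{y}$ whose quadratic part is the $\zeta$-independent negative-definite form $-(I-\boldsymbol{p}\boldsymbol{p}^{T})$; completing the square (using the Sherman--Morrison formula for $(I-\boldsymbol{p}\boldsymbol{p}^{T})^{-1}$) shows that $C(\zeta)$ is the $n$-ellipsoid $(\boldsymbol{y}-\boldsymbol{y}_0(\zeta))^{T}(I-\boldsymbol{p}\boldsymbol{p}^{T})(\boldsymbol{y}-\boldsymbol{y}_0(\zeta))\le\rho(\zeta)^2$ with $\rho(\zeta)^2=A^2(\zeta+c_0)^2-1$, where $A^2=(1-\abs{\boldsymbol{p}}^2)^{-1}>1$. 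The vertex condition $\rho(0)=0$ forces $c_0=1/A$, so $\rho(\zeta)^2=\sigma^2-1$ with $\sigma:=A\zeta+1$, affine in $\zeta$ and equal to $1$ at $\zeta=0$. Since the shape matrix $I-\boldsymbol{p}\boldsymbol{p}^{T}$ does not depend on $\zeta$, formula \eqref{eq:volumen} gives $V_n^*(C(\zeta))=C_0\,(\sigma^2-1)^{n/2}$ for a constant $C_0$, whence, substituting $\sigma=A\zeta+1$ and writing $\sigma_P:=A\zeta_P+1$,
\[
V_{n+1}(\mathcal{S})=\frac{C_0}{A}\int_1^{\sigma_P}(\sigma^2-1)^{n/2}\,d\sigma,\qquad
V_{n+1}(\mathcal{K})=V_n^*(C(\zeta_P))\,\frac{\zeta_P}{n+1}=\frac{C_0}{A}\,\frac{(\sigma_P^2-1)^{n/2}(\sigma_P-1)}{n+1},
\]
the second identity being obtained exactly as for the Archimedean cone in Lemma~\ref{lema2} (the sections of $\mathcal{K}$ parallel to the base $C(\zeta_P)$ are homothetic to it with ratio $(\zeta/\zeta_P)^{n}$).

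The second task is to identify $\abs{RT}$ and $\abs{OR}$, which is cleanest in the original coordinates. The tangency condition at $R$ gives $R=(A\boldsymbol{p},A)$, and the centre $T$ of the base section, namely the critical point of $(c+\boldsymbol{p}\cdot\boldsymbol{x})^2-\abs{\boldsymbol{x}}^2$ restricted to $\Upsilon$, is $T=(cA^2\boldsymbol{p},cA^2)$. Thus $O$, $R$, $T$ all lie on the ray spanned by $(\boldsymbol{p},1)$, which is precisely the diameter of $\mathcal{H}$ through $R$ (as it must be), at parameters $0$, $A$, $cA^2$ respectively, so $\abs{RT}/\abs{OR}=cA-1$. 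On the other hand $\Upsilon'$ is the plane $z=\boldsymbol{p}\cdot\boldsymbol{x}+1/A$, which under the shear sits at $\zeta=1/A-c_0=0$ (consistent with the choice of $c_0$) and $\Upsilon$ sits at $\zeta_P=c-1/A$; hence $\sigma_P=A\zeta_P+1=Ac$ and therefore
\[
\frac{\abs{RT}}{\abs{OR}}=Ac-1=\sigma_P-1 .
\]

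It remains to combine everything and set $n=2$. Dividing the two volume formulas,
\[
\frac{V(\mathcal{S})}{V(\mathcal{K})}=\frac{(n+1)\int_1^{\sigma_P}(\sigma^2-1)^{n/2}\,d\sigma}{(\sigma_P^2-1)^{n/2}(\sigma_P-1)},
\]
and for $n=2$ one has $\int_1^{\sigma_P}(\sigma^2-1)\,d\sigma=\tfrac{1}{3}(\sigma_P-1)^2(\sigma_P+2)$ while $\sigma_P^2-1=(\sigma_P-1)(\sigma_P+1)$, so the ratio collapses to $\dfrac{\sigma_P+2}{\sigma_P+1}$; substituting $\sigma_P-1=\abs{RT}/\abs{OR}$ turns this into $\dfrac{\abs{RT}+3\abs{OR}}{\abs{RT}+2\abs{OR}}$, as claimed. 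The main obstacle is the third paragraph: one has to locate $R$, $T$ and the diameter $OR$ exactly and extract the affine-invariant quantity $\abs{RT}/\abs{OR}=\sigma_P-1$, for that is the step that must match Archimedes' metric statement precisely; the volume side, by contrast, is a direct transcription of the paraboloid argument, the only new input being the elementary formula for the $n$-volume of an ellipsoid presented by a quadratic inequality. (For general $n$ the same computation yields the displayed ratio; it is a rational function of $\abs{RT}/\abs{OR}$ exactly when $n$ is even, Archimedes' formula being the case $n=2$.)
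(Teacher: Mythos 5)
Your argument is correct and is essentially the paper's own route: the paper obtains this proposition as the $n=2$ case of Lemma~\ref{lema3} (slicing parallel to the cutting hyperplane after an affine change of variables, with the sections being $n$-ellipsoids of volume proportional to $(\zeta^2-1)^{n/2}$ and the Archimedean cone handled by homothetic sections), followed by the observation $\abs{RT}/\abs{OR}=\zeta_P-1$, which is exactly your $\sigma_P-1$. The only differences are cosmetic: you use a unit-Jacobian shear and let the constant $C_0$ cancel (via Sherman--Morrison) where the paper uses the transformation \eqref{eq10} of Jacobian $\sqrt q$ and the determinant evaluations $J_{n+1}$, $K_{n+1}$ of the Appendix, and you spell out the computation of $R$, $T$, $O$ that the paper's remark only asserts.
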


Our next lemma is an extension of this Archimedes result and it gives the volume of the region limited by a two-sheeted hyperboloid and a hyperplane in the Euclidean space $E^{n+1}$.

\begin{lema} \label{lema3}
Let $n\ge 1$. Let $(\overline{p_1},\ldots,\overline{p_n})\in\mathbb{R}^n$ and $P=(\overline{\boldsymbol{x}},\overline{z})\in\mathbb{R}^{n+1}$  be fixed.
Let $d= \overline{z}-\sum_{i=1}^n\overline{p_i}\,\overline{x_i}$ and $q=1-\sum_{1=1}^n\overline{p_i}^2$. Suppose that\footnote{With these conditions, the hyperplane $\Upsilon$ cuts off from the upper sheet of the hyperboloid $z^2=1+\sum_{i=1}^n x_i^2$ a compact set. } $q>0$ and $d\ge \sqrt{q}$.
The Euclidean volume of the compact set $\mathcal{S}\subset E^{n+1}$ cut off by the hyperplane
$
\Upsilon\colon z-\overline{z}=\sum_{i=1}^n\overline{p_i}(x_i-\overline{x_i})
$
from the set $\mathcal{H}=\{ (\boldsymbol{x},z)\vert z\ge 1;\ \sum_{i=1}^n x_i^2\le z^2-1\}$ is equal to
$
G_0(\zeta_P),
$
where $\zeta_P=d/\sqrt{q}$ and
\begin{equation}\label{ge0}
 G_0(t)=\frac{\pi^{n/2}}{\Gamma(n/2+1)}\int_1^t (\zeta^2-1)^{n/2}\,d\zeta.
\end{equation}

In fact, this volume is equal to $\frac{n+1}{(\zeta_P^2-1)^{n/2}(\zeta_P-1)}\int_1^{\zeta_P}(\zeta^2-1)^{n/2}\,d\zeta$ times the volume of the cone which has the same base and the same axis as the segment.
\end{lema}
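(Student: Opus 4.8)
The plan is to mimic the proof of Lemma \ref{lema2}, replacing the paraboloid by the two-sheeted hyperboloid and the $n$-sphere chordal sections by $n$-spheres of variable radius. First I would diagonalise the situation by an affine change of variables $(\boldsymbol{x},z)\mapsto(\boldsymbol{y},\zeta)$ of unit Jacobian, chosen so that the cutting hyperplane $\Upsilon$ becomes $\zeta=\zeta_P$ with $\zeta_P=d/\sqrt q$, the hyperboloid $z^2=1+\sum x_i^2$ becomes $\zeta^2=1+\sum y_i^2$, and the level $\zeta=1$ corresponds to the vertex $R$ of the segment (the contact point of the hyperplane parallel to $\Upsilon$ tangent to $\mathcal H$). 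The natural candidate is: shift by the point $(\overline{p_1}/\sqrt q,\ldots,\overline{p_n}/\sqrt q,1/\sqrt q)$ and apply the shear matrix with bottom row $(\overline{p_1},\ldots,\overline{p_n},1)$ scaled so that the quadratic form $z^2-\sum x_i^2$ is preserved; since $q=1-\sum\overline{p_i}^2>0$ this shear is a Lorentz-type transformation of determinant $1$. Verifying that this works is a bounded computation: one checks that $z^2-\sum x_i^2-1$ pulls back to $\zeta^2-\sum y_i^2-1$ and that the affine image of $\{\zeta=\zeta_P\}$ is exactly $\Upsilon$, using the definitions $d=\overline z-\sum\overline{p_i}\,\overline{x_i}$, $q=1-\sum\overline{p_i}^2$.

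Once the coordinates are fixed, the chordal section $C(\zeta)$ of $\mathcal H^*$ at a fixed level $\zeta\in[1,\zeta_P]$ is the $n$-ball $\{\sum y_i^2\le \zeta^2-1\}$, an $n$-sphere of radius $(\zeta^2-1)^{1/2}$. Then Fubini gives
\begin{equation*}
V_{n+1}(\mathcal S)=\int_1^{\zeta_P}\alpha_n\bigl((\zeta^2-1)^{1/2}\bigr)\,d\zeta=\frac{\pi^{n/2}}{\Gamma(n/2+1)}\int_1^{\zeta_P}(\zeta^2-1)^{n/2}\,d\zeta=G_0(\zeta_P),
\end{equation*}
using the volume formula \eqref{eq:volumen} for the $n$-sphere. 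This is the first assertion.

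For the comparison with the cone, the Archimedean cone $\mathcal K$ has base $C(\zeta_P)$, an $n$-ball of radius $(\zeta_P^2-1)^{1/2}$, vertex $R$ at level $\zeta=1$, hence axis length $\zeta_P-1$. Its parallel sections $K(\zeta)$ are $n$-balls with $V_n^*(K(\zeta))/V_n^*(C(\zeta_P))=\bigl((\zeta-1)/(\zeta_P-1)\bigr)^n$, so integrating in $\zeta$ from $1$ to $\zeta_P$ gives $V_{n+1}(\mathcal K)=V_n^*(C(\zeta_P))\,(\zeta_P-1)/(n+1)=\frac{\pi^{n/2}}{(n+1)\Gamma(n/2+1)}(\zeta_P^2-1)^{n/2}(\zeta_P-1)$. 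Dividing $G_0(\zeta_P)$ by this quantity yields the stated factor $\frac{n+1}{(\zeta_P^2-1)^{n/2}(\zeta_P-1)}\int_1^{\zeta_P}(\zeta^2-1)^{n/2}\,d\zeta$. One should also remark that the hypotheses $q>0$, $d\ge\sqrt q$ are exactly what guarantees $\zeta_P\ge 1$, so that $\Upsilon$ meets the upper sheet of the hyperboloid in a genuine $(n-1)$-sphere and the cut-off set $\mathcal S$ is compact (and degenerates to a point when $d=\sqrt q$).

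The only real obstacle is the first step: finding the explicit unit-Jacobian affine change of variables that simultaneously straightens $\Upsilon$ and normalises the hyperboloid, and confirming that the new vertical coordinate of $\Upsilon$ is precisely $d/\sqrt q$. This is the hyperbolic analogue of the shear used for the paraboloid, but because the relevant quadratic form is indefinite one must take the "rotation'' in the plane spanned by the axis of $\mathcal H$ and the normal of $\Upsilon$ to be a hyperbolic rotation; the algebra is elementary but is where the constants $q$ and $d$ enter, and it is the natural candidate for relegation to the appendix. Everything after that is a direct transcription of the paraboloid computation with $\zeta^{n/2}$ replaced by $(\zeta^2-1)^{n/2}$.
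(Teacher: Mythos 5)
Your proposal is correct and reaches the same conclusion by the same overall strategy as the paper---straighten the cutting hyperplane by an affine change of variables, integrate by slices, and then compare with the Archimedean cone---but the change of variables itself is genuinely different. The paper uses the shear \eqref{eq10}, whose Jacobian is $\sqrt{q}$ (computed via $J_{n+1}$ in Note~\ref{app1}) and which does \emph{not} preserve the quadratic form: the slices become $n$-ellipsoids given by \eqref{ec:quadratic5}, and evaluating their volume needs the determinant $K_{n+1}=q$ of Note~\ref{app2}, after which the two factors of $\sqrt{q}$ cancel. You instead use a determinant-one Lorentz boost preserving $z^2-\sum_i x_i^2$, so the transformed region is exactly $\{1\le\zeta\le\zeta_P,\ \sum_i y_i^2\le\zeta^2-1\}$ with round spherical slices; this dispenses with both appendix determinants and makes it transparent that $\mathcal{H}$ maps onto the standard solid hyperboloid and that $\Upsilon$ and the parallel tangent hyperplane go to $\zeta=\zeta_P$ and $\zeta=1$, because under the boost $z-\sum_i\overline{p_i}x_i=\sqrt{q}\,\zeta$. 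The slicing integral, the cone computation, and the remark that $q>0$, $d\ge\sqrt{q}$ amount to $\zeta_P\ge1$ all match the paper. What the paper's route buys is elementary matrix algebra (an explicit shear plus two induction determinants); what yours buys is a cleaner, measure- and form-preserving map with no compensating factors.

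Two corrections to your description of the candidate transformation, which is the step you rightly single out for verification. First, drop the shift by $R=(\overline{p_1}/\sqrt{q},\ldots,\overline{p_n}/\sqrt{q},1/\sqrt{q})$: if you translate by $R$ and then boost, the hyperboloid pulls back to $(\zeta+1)^2=1+\sum_i y_i^2$ and $\Upsilon$ to $\zeta=\zeta_P-1$, contradicting your stated normalization; the purely linear boost already places the tangency point at $(\boldsymbol{0},1)$, since it sends $e_{n+1}$ to $R$. Second, the boost is not the paper's shear ``rescaled'': writing $\gamma=1/\sqrt{q}$, its last column and row are $\gamma(\overline{p_1},\ldots,\overline{p_n})$ and $\gamma$ in the corner, but its spatial block has entries $\delta_{ij}+(\gamma-1)\,\overline{p_i}\,\overline{p_j}/\sum_{l}\overline{p_l}^2$ rather than $\delta_{ij}$; only with this block is the determinant $1$ and the form preserved. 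With these adjustments your argument is complete.
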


\begin{proof}
The $(n+1)$-dimensional volume of  $\mathcal{S}$ is
$$
V_{n+1}(\mathcal{S})=\int_{\mathcal{S}}\,d\boldsymbol{x}\,dz.
$$
The hyperplane parallel to $\Upsilon$ and tangent to $\partial\mathcal{H}$ has the equation
\[
\Upsilon'\colon z=\sum_{i=1}^n\overline{p_i}x_i+\sqrt{q},
\]
and the tangency point of $\Upsilon'$ and $\partial\mathcal{H}$  is
$$
R=\left(\frac{\overline{p_1}}{\sqrt{q}}, \ldots, \frac{\overline{p_n}}{\sqrt{q}},\frac1{\sqrt{q}}\right),
$$
as can be easily checked.

Change the variables according to the following linear transformation
\begin{equation} \label{eq10}
\begin{pmatrix} x_1\\ x_2\\\vdots\\ x_n\\
z\end{pmatrix}=
\begin{pmatrix}
1&0&\cdots&0&\frac{\overline{p_1}}{\sqrt{q}}\\[4pt]
0&1&\cdots&0&\frac{\overline{p_2}}{\sqrt{q}}\\
\vdots&\vdots&\ddots&\vdots&\vdots\\
0&0&\cdots&1&\frac{\overline{p_n}}{\sqrt{q}}\\[4pt]
\overline{p_1}&\overline{p_2}&\cdots&\overline{p_n}&\frac1{\sqrt{q}}\end{pmatrix}
\begin{pmatrix}
y_1\\y_2\\\vdots\\y_n\\\zeta
\end{pmatrix}.
\end{equation}
The new $\boldsymbol{y}$-hyperplane is the parallel to $\Upsilon$ through the origin,
and the $\zeta$-axis is the ray $OR$ (see Figure~\ref{fig:hipermulti4}).

\begin{figure}[h]
\begin{center}
\includegraphics[scale=.8]{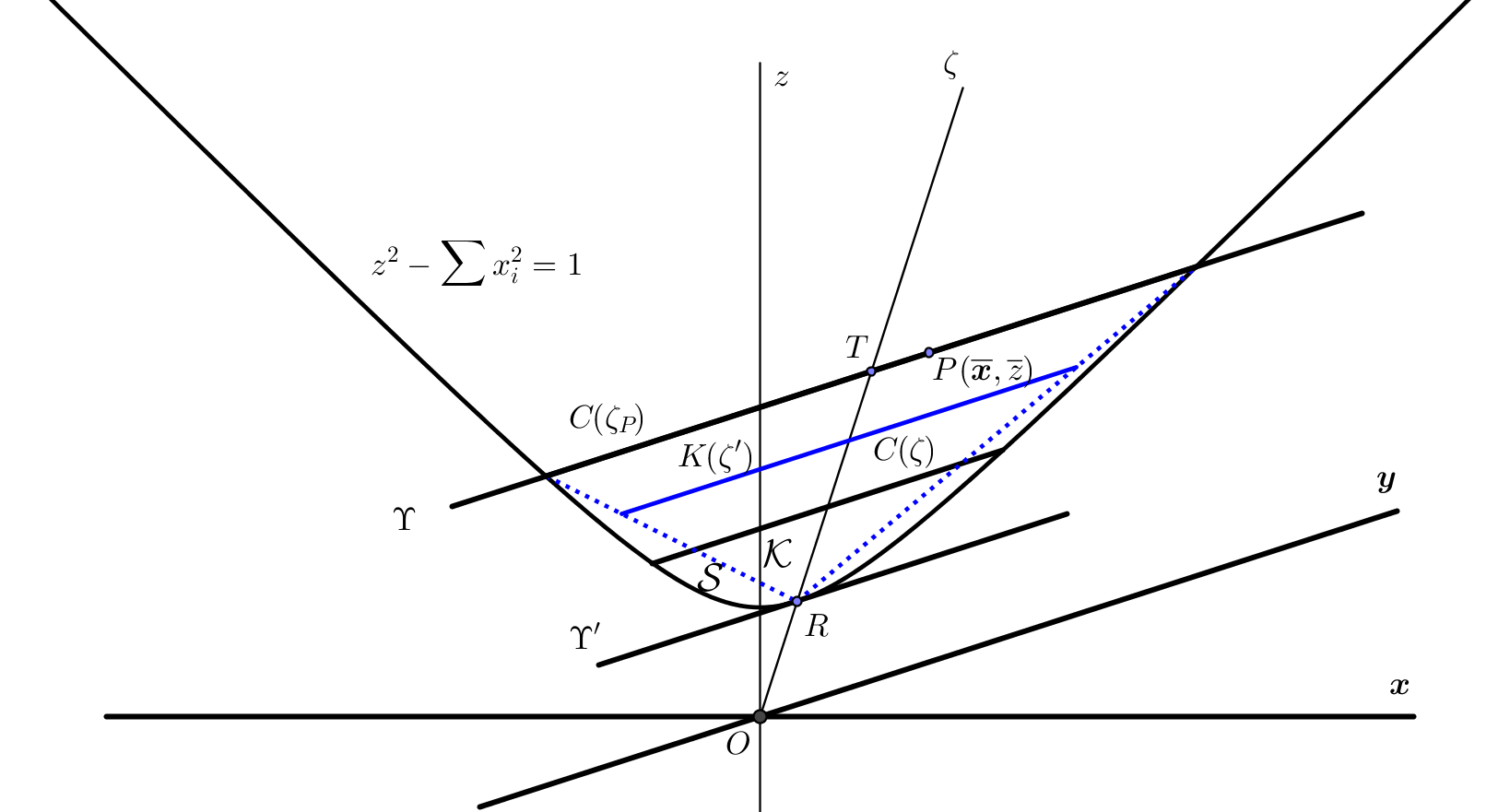}
\caption{}\label{fig:hipermulti4}
\end{center}
\end{figure}

We have (see Note~\ref{app1} in the Appendix) 
\begin{equation}
\label{hypapp3}
\frac{\partial(\boldsymbol{x},z)}{\partial(\boldsymbol{y},\zeta)}=\frac1{\overline{q}}\, J_{n+1}(\overline{p_1},\ldots,\overline{p_n})
=\sqrt{q},
\end{equation}
and we can easily check that the hyperplanes  $\Upsilon'$ and $\Upsilon$ turn respectively, by this transformation of coordinates, into the hyperplanes  $\zeta=1$ and $\zeta=\zeta_P$. Thus, integration by slices gives
$$
V_{n+1}(\mathcal{S})=\sqrt{q}\int_{\mathcal{S}^\ast}\,d{\boldsymbol{y}}\,d\zeta
=\sqrt{q}\int_1^{\zeta_P}
V_n^*(C(\zeta))\,d\zeta.
$$

The $(\boldsymbol{y},\zeta)$-equation of the hyperboloid $\mathcal{H}$ is
\begin{equation}
\label{ec:quadratic5}
\sum_{i=1}^n(1-\overline{p_i}^2)y_i^2-2\sum_{i<j}\overline{p_i}\,\overline{p_j}y_iy_j=\zeta^2-1.
\end{equation}
The section with each fixed hyperplane $\zeta=\overline{\zeta}\ge 1$ is a $n$-ellipsoid of semiaxes $\sqrt{(\overline{\zeta}^2-1)/\lambda_i}$. Here, $\lambda_i$ ($1=1,\ldots,n$) are the eigenvalues of the symmetric matrix associated to the definite positive quadratic form on the left hand side of \eqref{ec:quadratic5}. We have (see Note \ref{app2} in the Appendix)
$$
\lambda_1\cdots\lambda_n=
\begin{vmatrix}
1-\overline{p_1}^2&-\overline{p_1}\,\overline{p_2} &\cdots&-\overline{p_1}\,\overline{p_n}\\
-\overline{p_1}\,\overline{p_2} & 1-\overline{p_2}^2&\cdots&-\overline{p_2}\,\overline{p_n}\\
\vdots&\vdots&\ddots&\vdots\\
-\overline{p_1}\,\overline{p_n}&-\overline{p_2}\,\overline{p_n}&\cdots&1-\overline{p_n}^2
\end{vmatrix}
=K_{n+1}(\overline{p_1},\ldots,\overline{p_n})=q,
$$
so that, for $1\le \zeta\le \zeta_P$,
$$
V_n^*(C(\zeta))=\frac{\alpha_n(1)}{\sqrt{q}}\,(\zeta^2-1)^{n/2},
$$
where $\alpha_n$ is given in \eqref{eq:volumen}, and
$$
V_{n+1}(\mathcal{S})=\alpha_n(1)
\int_1^{\zeta_P}(\zeta^2-1)^{n/2}\,d\zeta,
$$
 as stated.

The Archimedean cone $\mathcal{K}$ has base $C(\zeta_P)$ and vertex $R$. As
\[
\frac{V_n^*(K(\zeta))}{V_n^*(C(\zeta_P))}=\Bigl(\frac{\zeta-1}{\zeta_P-1}\Bigr)^n,
\]
we have
\begin{align*}
V_{n+1}(\mathcal{K})&=\sqrt{q}\int_{\mathcal{K}^\ast}\,d{\boldsymbol{y}}\,d\zeta
=\sqrt{q}\int_1^{\zeta_P} V_n^*(K(\zeta))\,d\zeta\\
&=\sqrt{q}\frac{V_n^*(C(\zeta_P))}{(\zeta_P-1)^n}\int_1^{\zeta_P} (\zeta-1)^n\,d\zeta\\
&=\frac{\alpha_n(1)}{n+1}\cdot(\zeta_P^2-1)^{n/2} (\zeta_P-1).
\end{align*}
Therefore,
\begin{equation*}
\frac{V_{n+1}(\mathcal{S})}{V_{n+1}(\mathcal{K})}%
=\frac{n+1}{(\zeta_P^2-1)^{n/2}(\zeta_P-1)}\int_1^{\zeta_P}(\zeta^2-1)^{n/2}\,d\zeta. \qed
\end{equation*}
\renewcommand{\qed}{}
\end{proof}

\begin{rem}
\textit{a)}
Observe that, for example, for $n=1$ we have
$$
V_2(\mathcal{S})=k\sqrt{k^2-1}-\cosh^{-1}k
$$
and, for $n=2$,
$$
V_3(\mathcal{S})=\frac\pi3(k-1)^2(k+2).
$$

\textit{b)} When $n=2$ we have
\begin{equation*}
V_3(\mathcal{S})=\frac{\zeta_P+2}{\zeta_P+1}\cdot V_3(\mathcal{K}),
\end{equation*}
as Archimedes said. Observe that $\abs{RT}/\abs{OR}=\zeta_P-1$, and thus
$$
\frac{\abs{RT}+3\abs{OR}}{\abs{RT}+2\abs{OR}}=\frac{\zeta_P+2}{\zeta_P+1}.
$$
\end{rem}

The direct part of the following result for $n=2$ should be a corollary of some Archimedean proposition which should be related to Proposition \ref{prop:25-26}, analogously as Proposition \ref{prop:23} (on segments of a paraboloid) is related to Proposition \ref{prop:21-22}. But there is not in (CS) such a proposition.\footnote{Perhaps the major complication of the formula explains the gap we observe in this place of (CS) (and so later, with the segments of an ellipsoid).}

\begin{teor}\label{teor3} Let $n\ge1$. Let
$a_1,\ldots,a_{n+1}>0$ and $k>1$ be fixed, and $h(\boldsymbol{x},z)=-\sum_{i=1}^n\frac{x_i^2}{a_i^2}+\frac{z^2}{a_{n+1}^2}$. Every hyperplane tangent to any sheet of the hyperboloid
$h(\boldsymbol{x},z)=k^2$ cuts off from the corresponding sheet of the homothetic hyperboloid $h(\boldsymbol{x},z)=1$, a compact $(n+1)$-dimensional set $\mathcal{S}$  of constant Euclidean volume $G(k)=a_1\cdots
a_{n+1} G_0(k)$.

Conversely, let $\mathcal{M}\colon z=f(\boldsymbol{x})$ be a $C^{(2)}(\mathbb{R}^{n+1})$-surface lying inside the upper sheet of the hyperboloid $h(\boldsymbol{x},z)=1$.
If the hyperplane tangent to $\mathcal{M}$  at a generic point $P=(\overline{\boldsymbol{x}},\overline{z})$ ($\overline{z}=f(\overline{\boldsymbol{x}})$; $f_{x_i}(\overline{\boldsymbol{x}})=\overline{p_i}$ for $i=1,\ldots,n$)
verifies the conditions
\begin{equation}
\label{eq:condmulconverse}
 a_{n+1}^2-\sum_{i=1}^n a_i^2\overline{p_i}^2>0, \quad\text{and}\quad
\overline{z}-\sum_{i=1}^n\overline{p_i}\,\overline{x_i}\ge\sqrt{a_{n+1}^2-\sum_{i=1}^n a_i^2\overline{p_i}^2}
\end{equation}
and
cuts off from the upper sheet of $h(\boldsymbol{x},z)=1$
a compact set of constant (independent of $P$) $(n+1)$-dimensional Euclidean volume $V$,  then $\mathcal{M}$ is the upper sheet of the hyperboloid $h(\boldsymbol{x},z)=k^2$, where $k=G^{-1}(V)$ ($k>1$).
\end{teor}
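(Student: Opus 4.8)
The plan is to run the argument of Theorem~\ref{theor:par}, with Lemma~\ref{lema3} in place of Lemma~\ref{lema2} and a hyperbolic Clairaut equation in place of \eqref{eq:claigen2}. The first step is to reduce to the standard two-sheeted hyperboloid $z^2-\sum_{i=1}^nx_i^2=c$: the linear change $x_i=a_iX_i$ ($i=1,\dots,n$), $z=a_{n+1}Z$ turns $h(\boldsymbol{x},z)=c$ into $Z^2-\sum_{i=1}^nX_i^2=c$, sends tangent hyperplanes to tangent hyperplanes, multiplies every $(n+1)$-dimensional volume by $a_1\cdots a_{n+1}$, and, writing $\overline{P_i}=a_i\overline{p_i}/a_{n+1}$ for the slopes of the transformed surface, carries the two conditions of \eqref{eq:condmulconverse} into $1-\sum_{i=1}^n\overline{P_i}^2>0$ and $\overline{Z}-\sum_{i=1}^n\overline{P_i}\,\overline{X_i}\ge\sqrt{1-\sum_{i=1}^n\overline{P_i}^2}$. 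Hence it suffices to prove both halves of the theorem when $a_1=\dots=a_{n+1}=1$, with $G$ replaced by $G_0$ and $V$ by $V/(a_1\cdots a_{n+1})$, reinstating the factor at the end; the symmetry $z\mapsto -z$ lets me work with the upper sheets throughout.

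For the direct part in the standard case, the hyperplane tangent to $z^2-\sum_{i=1}^n x_i^2=k^2$ at a point $T=(\overline{\boldsymbol{x}},\overline{z})$ of its upper sheet is $z-\overline{z}=\sum_{i=1}^n(\overline{x_i}/\overline{z})(x_i-\overline{x_i})$, so in the notation of Lemma~\ref{lema3} one has $\overline{p_i}=\overline{x_i}/\overline{z}$; then $\overline{z}^2-\sum_{i=1}^n\overline{x_i}^2=k^2$ gives $d=\overline{z}-\sum_{i=1}^n\overline{p_i}\,\overline{x_i}=k^2/\overline{z}$ and $q=1-\sum_{i=1}^n\overline{p_i}^2=k^2/\overline{z}^2>0$, whence $\zeta_P=d/\sqrt{q}=k$, with $d\ge\sqrt{q}$ because $k>1$. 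Lemma~\ref{lema3} then yields $V_{n+1}(\mathcal{S})=G_0(k)$, independent of $T$; reinstating the factor $a_1\cdots a_{n+1}$ gives $V_{n+1}(\mathcal{S})=G(k)$, which is the direct claim.

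For the converse, let $P=(\overline{\boldsymbol{x}},\overline{z})$ be a generic point of $\mathcal{M}\colon z=f(\boldsymbol{x})$, so $\overline{z}=f(\overline{\boldsymbol{x}})$ and $\overline{p_i}=f_{x_i}(\overline{\boldsymbol{x}})$. After the reduction, hypothesis \eqref{eq:condmulconverse} is precisely what Lemma~\ref{lema3} needs, and it gives $V=G_0(\zeta_P)$ with $\zeta_P=\bigl(\overline{z}-\sum_{i=1}^n\overline{p_i}\,\overline{x_i}\bigr)/\sqrt{1-\sum_{i=1}^n\overline{p_i}^2}$. Since $G_0'(t)=\frac{\pi^{n/2}}{\Gamma(n/2+1)}(t^2-1)^{n/2}>0$ for $t>1$ by \eqref{ge0}, the function $G_0$ is strictly increasing, hence injective, on $[1,\infty)$, so $\zeta_P$ takes one and the same value $k:=G_0^{-1}(V)>1$ at every point of $\mathcal{M}$. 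Letting $\overline{\boldsymbol{x}}$ range over $\mathbb{R}^n$ this is the Clairaut first-order partial differential equation
$$
f(\boldsymbol{x})=\sum_{i=1}^nf_{x_i}(\boldsymbol{x})\,x_i+k\sqrt{1-\sum_{i=1}^n\bigl(f_{x_i}(\boldsymbol{x})\bigr)^2}\,,
$$
whose complete integral is the $n$-parameter family of hyperplanes $z=\sum_{i=1}^na_ix_i+k\sqrt{1-\sum_{i=1}^na_i^2}$ (with $\sum_{i=1}^na_i^2<1$), and whose singular integral, obtained by eliminating $a_1,\dots,a_n$ from that equation together with $x_i=k\,a_i/\sqrt{1-\sum_{j=1}^na_j^2}$ ($i=1,\dots,n$), is the upper sheet $z=\sqrt{k^2+\sum_{i=1}^nx_i^2}$ of $z^2-\sum_{i=1}^nx_i^2=k^2$. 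As in the remark following Theorem~\ref{theor:par}, the hyperplanes of the complete integral and the envelopes of proper subfamilies either are, or are ruled by, affine flats that pass through the hyperboloid $z^2-\sum_{i=1}^nx_i^2=1$ and hence do not lie inside its upper sheet; so the singular integral is the only admissible solution, and undoing the reduction identifies $\mathcal{M}$ with the upper sheet of $h(\boldsymbol{x},z)=k^2$, where $a_1\cdots a_{n+1}G_0(k)=V$, that is, $k=G^{-1}(V)$.

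The step I expect to be delicate is exactly this last uniqueness assertion: besides the complete integral a Clairaut equation admits the singular integral and the envelopes of its proper subfamilies, and one must show that the constraint that $\mathcal{M}$ lie inside the upper sheet of $h=1$ discards all of these but the singular integral; this is argued as for the paraboloid in the remark after Theorem~\ref{theor:par}. Everything else — the change of variables, the identities $\overline{p_i}=\overline{x_i}/\overline{z}$, $d=k^2/\overline{z}$, $q=k^2/\overline{z}^2$, $\zeta_P=k$, the monotonicity of $G_0$, and the elimination producing the envelope — is routine.
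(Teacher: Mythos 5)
Your proposal is correct and follows essentially the same route as the paper: reduce to $a_1=\cdots=a_{n+1}=1$ by the linear change, apply Lemma~\ref{lema3} with $\overline{p_i}=\overline{x_i}/\overline{z}$ and $\zeta_T=k$ for the direct part, and in the converse use injectivity of $G_0$ to obtain the Clairaut equation, whose singular integral (the upper sheet of $z^2-\sum_{i=1}^n x_i^2=k^2$) is the only solution compatible with $\mathcal{M}$ lying inside the upper sheet of $h=1$. If anything, you justify the exclusion of the hyperplanes and of the envelopes of proper subfamilies slightly more explicitly than the paper does, which simply asserts it by analogy with the parabolic case.
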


\begin{proof}
Again, as in the proof of Theorem \ref{theor:par}, by using a linear transformation, it is enough to consider $a_1=\cdots=a_n=a_{n+1}=1$.

Let $\Upsilon$ be the hyperplane tangent to the upper sheet of  $h(\boldsymbol{x},z)=k^2$ at the generic point $T=(\overline{\boldsymbol{x}},\overline{z})$. We have $\overline{z}=\sqrt{k^2+\sum_{i=1}^n \overline{x_i}^2}$ and $\overline{p_i}=\overline{x_i}/\overline{z}$. The coordinates of the point $R$ are $\frac1k(\overline{\boldsymbol{x}},\overline{z})$. On the other hand, $\zeta_T=k$ and $T$ belongs to the ray $OR$. 
It suffices to apply Lemma \ref{lema3} to conclude that
$V_{n+1}(\mathcal{S})=G_0(k)$.

Conversely, we assume that the hyperplane $\Upsilon$ tangent to $\mathcal{M}$  at the point $P=(\overline{\boldsymbol{x}},\overline{z})$
cuts off from the upper sheet of $h(\boldsymbol{x},z)=1$ a compact set, and that this compact set has a constant $(n+1)$-dimensional volume  $V$. Let $k=G_0^{-1}(V)$.

With $f_{x_i}(\overline{\boldsymbol{x}})=\overline{p_i}$ for $i=1,\ldots,n$, the equation of $\Upsilon$ is
\[
z=\sum_{i=1}^n\overline{p_i}x_i+\overline{z}-\sum_{i=1}^n\overline{p_i}\,\overline{x_i}.
\]
By applying Lemma \ref{lema3} it results that
$$
V=G_0\left(\frac{\overline{z}-\sum_{i=1}^n\overline{p_i}\,\overline{x_i}}{\sqrt{1-\sum_{1=1}^n\overline{p_i}^2}}\right).
$$
But we had also
$V=G_0(k)$, and  $G_0$ is a one-to-one function on the interval $(1,\infty)$ (because
$H_0'(t)=C_n t(t^2-1)^{n/2-1}\ne 0$ in $(1,\infty)$, where $C_n$ is a constant independent of $k$). From this, we conclude that
\begin{equation}  \label{ec:hiperclaigen}
\overline{z}-\sum_{i=1}^n\overline{p_i}\,\overline{x_i}=k\,\sqrt{1-\sum_{1=1}^n\overline{p_i}^2},
\end{equation}
once again a Clairaut' first order partial differential equation, since $\overline{\boldsymbol{x}}$ runs in $\mathbb{R}^n$. The solutions of \eqref{ec:hiperclaigen} are the hyperplanes of the $n$-parametric family
\[
\phi(\boldsymbol{x},z,\lambda_1,\ldots,\lambda_n):=z-\sum_{i=1}^n\lambda_i\,x_i-k\,\sqrt{1-\sum_{1=1}^n\lambda_i^2}=0,
\]
the envelopes of $r$-parametric subfamilies with $1\le r\le n-1$, and the singular integral, the envelope of the complete family. The only acceptable solution is the latter, just the upper sheet of the two-sheeted hyperboloid $z^2-\sum_{i=1}^n x_i^2=k^2$, centrally homothetic to $z^2-\sum_{i=1}^n x_i^2=1$. Indeed, $\frac{\partial\phi}{\partial\lambda_i}=0$ gives $x_i=\frac{k\lambda_i}{\sqrt{1-\sum_{1=1}^n\lambda_i^2}}$ for $i=1,\ldots,n$, from where
$$
k^2+\sum_{i=1}^n x_i^2=\frac{k^2}{1-\sum_{i=1}^n\lambda_i^2}.
$$
Then $x_i=\lambda_i\,\sqrt{k^2+\sum_{i=1}^n x_i^2}$ for $i=1,\ldots,n$, and $z=\sqrt{k^2+\sum_{i=1}^n x_i^2}$.
\end{proof}
\subsection{The general one-sheeted hyperbolic case}

In \cite[p. 345]{poldude} it is proved that the surface of flotation of a certain portion of the solid bounded by the one-sheeted hyperboloid $z^2=x^2+y^2-1$ is a corresponding portion of a two-sheeted hyperboloid centrally homothetic of the two-sheeted \emph{complementary} hyperboloid $z^2=x^2+y^2+1$. Our next target is the proof of a $(n+1)$-dimensional analogous of this result. As usual, we start with a lemma about the volume limited by the corresponding quadric and, in this case, two hyperplanes.

\begin{lema} \label{lema3bis}
Let $n\ge 1$ and $\varepsilon>0$. Let $(\overline{p_1},\ldots,\overline{p_n})\in\mathbb{R}^n$ and $P=(\overline{\boldsymbol{x}},\overline{z})\in\mathbb{R}^{n+1}$  be fixed.  Let $d= \overline{z}-\sum_{i=1}^n\overline{p_i}\,\overline{x_i}$ and $q=1-\sum_{1=1}^n\overline{p_i}^2$.
Suppose that\footnote{With these conditions we can see that the set $\mathcal{S}_\varepsilon$, for all $\varepsilon$, is a compact set of nonempty interior which contains the origin and contains completely the \lq\lq{}base\rq\rq{} $\{(\boldsymbol{x},z)\vert z=-1/\varepsilon;\ \sum_{i=1}^n x_i^2\le 1+z^2\}$ of the truncated hyperboloid $\mathcal{H}_\varepsilon$.} $q>0$, $d\ge 0$, and $1+\varepsilon d\ge \sqrt{1+\varepsilon^2}\sqrt{1-q}$. 
The Euclidean volume of the compact set $\mathcal{S}_\varepsilon\subset E^{n+1}$ cut off by the hyperplane
$
\Upsilon\colon z-\overline{z}=\sum_{i=1}^n\overline{p_i}(x_i-\overline{x_i})
$
from the set $\mathcal{H}_\varepsilon=\{(\boldsymbol{x},z)\vert z\ge -1/\varepsilon;\  \sum_{i=1}^n x_i^2\le 1+z^2\}$ is equal to
$H_0(\zeta_P)$, where $\zeta_P=d/\sqrt{q}$ and
\begin{equation} \label{hache0}
H_0(t)=\frac{\pi^{n/2}}{\Gamma(n/2+1)}\int_{-1/\varepsilon}^t (1+\zeta^2)^{n/2}\,d\zeta.
\end{equation}

\end{lema}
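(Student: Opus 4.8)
Following the pattern of Lemma~\ref{lema3}, the plan is to transform $\Upsilon$ into a coordinate hyperplane by a linear change of variables, so that the $n$-dimensional slices of the solid one-sheeted hyperboloid $\mathcal H^{+}:=\{(\boldsymbol x,z)\mid\sum_i x_i^2\le 1+z^2\}$ become $n$-ellipsoids of known volume, and then integrate by slices. Concretely, I would use exactly the transformation \eqref{eq10} (it depends only on $\overline{p_1},\dots,\overline{p_n}$ and $q$, both of which are defined here as before), whose Jacobian is $\sqrt q$ by Note~\ref{app1}. Under it, $\Upsilon$ turns into $\{\zeta=\zeta_P\}$ with $\zeta_P=d/\sqrt q$, the one-sheeted hyperboloid $\sum_i x_i^2-z^2=1$ turns into $\sum_i(1-\overline{p_i}^2)y_i^2-2\sum_{i<j}\overline{p_i}\,\overline{p_j}y_iy_j=1+\zeta^2$, whose quadratic form is positive definite with determinant $q$ by Note~\ref{app2}, so that the slice $C(\zeta)$ of $\mathcal H^{+}$ by $\{\zeta=\zeta\}$ is an $n$-ellipsoid of $n$-volume $\alpha_n(1)\,q^{-1/2}(1+\zeta^2)^{n/2}$, with $\alpha_n$ as in \eqref{eq:volumen}. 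The difference with Lemma~\ref{lema3} is that the hypothesis $q>0$ now \emph{prevents} the existence of a hyperplane parallel to $\Upsilon$ tangent to $\partial\mathcal H_\varepsilon$, so the truncating hyperplane $\{z=-1/\varepsilon\}$ takes over the role of the ``vertex hyperplane''. The price is that $\{z=-1/\varepsilon\}$ becomes, after \eqref{eq10}, the slanted hyperplane $\{\sum_i\overline{p_i}y_i+\zeta/\sqrt q=-1/\varepsilon\}$, not a level set of $\zeta$.

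The key geometric step -- which is the content of the footnote and the only place the hypothesis $1+\varepsilon d\ge\sqrt{1+\varepsilon^2}\sqrt{1-q}$ enters -- is to show that the entire part of $\mathcal H^{+}$ lying at or below the base, $\mathcal H^{+}\cap\{z\le-1/\varepsilon\}$, lies on the side $\{\zeta\le\zeta_P\}$ of $\Upsilon$. Indeed, for such a point Cauchy--Schwarz gives $z-\sum_i\overline{p_i}x_i\le z+\sqrt{1-q}\sqrt{1+z^2}$; the function $z\mapsto z+\sqrt{1-q}\sqrt{1+z^2}$ is increasing (its derivative is $1+\sqrt{1-q}\,z/\sqrt{1+z^2}>0$), and evaluating at $z=-1/\varepsilon$ and using the hypothesis (rewritten as $d\ge-1/\varepsilon+\sqrt{1-q}\sqrt{1+1/\varepsilon^2}$) yields $z-\sum_i\overline{p_i}x_i\le d$, i.e.\ $\zeta\le\zeta_P$. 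Consequently $\mathcal H^{+}$ decomposes as the disjoint union of $\mathcal S_\varepsilon$, the part strictly below the base, and the part strictly above $\Upsilon$. The same estimate, applied in the transformed picture to $\min_{C(\zeta)}z$, shows that the slice $C(\zeta)\cap\{z\ge-1/\varepsilon\}$ of $\mathcal S_\varepsilon$ equals the full ellipsoid $C(\zeta)$ once $\zeta$ is large enough, while for smaller $\zeta$ it is a proper cap of $C(\zeta)$.

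To evaluate $V_{n+1}(\mathcal S_\varepsilon)$ I would \emph{not} integrate the cap volumes directly; instead I would regard $V_{n+1}(\mathcal S_\varepsilon)$ as a function of the two positions $\zeta_P$ (of $\Upsilon$) and $-1/\varepsilon$ (of the base) and differentiate. Raising $\Upsilon$ adds the top slice $C(\zeta_P)$, which by the geometric step is a full $n$-ellipsoid, contributing (after the factor $\sqrt q$ from the Jacobian) $\partial V_{n+1}(\mathcal S_\varepsilon)/\partial\zeta_P=\alpha_n(1)(1+\zeta_P^2)^{n/2}$; raising the base removes the horizontal slice at $z=-1/\varepsilon$, which by the same use of the hypothesis (this is exactly ``$\mathcal S_\varepsilon$ contains the base'') is a full $n$-ball of radius $\sqrt{1+1/\varepsilon^2}$, giving $\partial V_{n+1}(\mathcal S_\varepsilon)/\partial(-1/\varepsilon)=-\alpha_n(1)(1+1/\varepsilon^2)^{n/2}$. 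These are compatible, and integrating them shows $V_{n+1}(\mathcal S_\varepsilon)=\alpha_n(1)\int_{-1/\varepsilon}^{\zeta_P}(1+t^2)^{n/2}\,dt+\text{const}$; matching against the case $\overline{p}=\boldsymbol 0$, where $\Upsilon$ is parallel to the base and elementary slicing by horizontal hyperplanes already gives $V_{n+1}(\mathcal S_\varepsilon)=\alpha_n(1)\int_{-1/\varepsilon}^{\zeta_P}(1+t^2)^{n/2}\,dt$, forces the constant to vanish, and this is $H_0(\zeta_P)$ by \eqref{hache0}.

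The main obstacle is precisely this last point: because the base hyperplane is not parallel to $\Upsilon$, no single slicing direction produces only full slices, so one must either push through the (somewhat lengthy) evaluation of the cap contributions from the non-full slices and check they sum to $\alpha_n(1)\int(1+t^2)^{n/2}$ over the missing interval, or argue as above by differentiating in the two position parameters. In the latter route the one subtlety to be careful about is that the ``constant of integration'' is genuinely independent of $\overline{p}$; this can be secured either by the rotational symmetry of $\mathcal H^{+}$ in $\boldsymbol x$ (so it depends on $\overline{p}$ only through $\|\overline{p}\|$) together with a further differentiation in $\|\overline{p}\|$, or by a careful regularized ``difference of two divergent slice-integrals'' argument, using that $V_{n+1}(\mathcal H^{+}\cap\{\zeta\le\zeta_P\})$ and $V_{n+1}(\mathcal H^{+}\cap\{z\le-1/\varepsilon\})$ both equal $\alpha_n(1)\int_{-\infty}^{\,\cdot\,}(1+t^2)^{n/2}\,dt$ and differ only by the finite quantity $\alpha_n(1)\int_{-1/\varepsilon}^{\zeta_P}(1+t^2)^{n/2}\,dt$.
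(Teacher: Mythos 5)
Your setup is sound and, up to the evaluation step, coincides with the paper's: the change of variables \eqref{eq10} with Jacobian $\sqrt q$, the slice volumes $\alpha_n(1)q^{-1/2}(1+\zeta^2)^{n/2}$ via Note \ref{app2}, and, most importantly, the Cauchy--Schwarz argument showing that the hypothesis $1+\varepsilon d\ge\sqrt{1+\varepsilon^2}\sqrt{1-q}$ forces every point of the solid hyperboloid $\mathcal H^{+}$ with $z\le-1/\varepsilon$ to satisfy $\zeta\le\zeta_P$; that is exactly the geometric role of the hypothesis. The genuine gap is in how you evaluate the volume. Your derivative-matching in the two positions determines $V_{n+1}(\mathcal S_\varepsilon)$ only up to an additive constant \emph{for each fixed} $\overline{p}$, and you pin the constant only at $\overline{p}=\boldsymbol 0$; as you yourself flag, its independence of $\overline{p}$ is precisely what remains to be proved, and neither proposed repair is carried out. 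The rotational symmetry does reduce the dependence to $\|\overline p\|$, but the ``further differentiation in $\|\overline p\|$'' is a tilt derivative requiring the centroid of the elliptical section swept by the pivoting hyperplane, i.e. real additional work; and the ``regularized difference'' version is circular as stated, because asserting that the two infinite volumes $V_{n+1}(\mathcal H^{+}\cap\{\zeta\le\zeta_P\})$ and $V_{n+1}(\mathcal H^{+}\cap\{z\le-1/\varepsilon\})$ differ by $\alpha_n(1)\int_{-1/\varepsilon}^{\zeta_P}(1+t^2)^{n/2}\,dt$ is, given your own decomposition of $\mathcal H^{+}$, exactly the statement of the lemma.

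The missing idea, which is how the paper closes this, is one extra auxiliary hyperplane: let $\Upsilon'$ be the hyperplane parallel to $\Upsilon$ through the origin and split $\mathcal S_\varepsilon$ there. The part of $\mathcal H_\varepsilon$ between $\Upsilon'$ and $\Upsilon$ is computed by exactly your slicing, giving $\alpha_n(1)\int_0^{\zeta_P}(1+\zeta^2)^{n/2}\,d\zeta$, while the part of $\mathcal H_\varepsilon$ below $\Upsilon'$ has, by the central symmetry of $\{\sum_i x_i^2\le 1+z^2\}$ about the origin (which exchanges the two wedges between $\Upsilon'$ and $\{z=0\}$), the same volume as the part of $\mathcal H_\varepsilon$ below $\{z=0\}$, namely $\alpha_n(1)\int_{-1/\varepsilon}^{0}(1+\zeta^2)^{n/2}\,d\zeta$ by horizontal slicing; adding the two pieces gives $H_0(\zeta_P)$ as in \eqref{hache0}. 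Your Cauchy--Schwarz step is still worth keeping even on this route: when $\sqrt{1+\varepsilon^2}\sqrt{1-q}>1$ (which the hypotheses allow once $d>0$), both the wedges between $\Upsilon'$ and $\{z=0\}$ and the slab between $\Upsilon'$ and $\Upsilon$ dip below the base $z=-1/\varepsilon$, and it is precisely your observation that the below-base part of $\mathcal H^{+}$ lies on the side $\{\zeta\le\zeta_P\}$ which shows that the two resulting discrepancies cancel, so the symmetry computation still produces the stated formula.
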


\begin{proof}
Let
\[
\Upsilon'\colon z=\sum_{i=1}^n\overline{p_i}x_i
\]
be the hyperplane parallel to  $\Upsilon$ passing through the origin. By symmetry, the volume of the compact set $\mathcal{B}_0$ which $\Upsilon'$ cuts off from $\mathcal{H}_\varepsilon$ is equal to the volume of the compact set which cuts off from $\mathcal{H}_\varepsilon$ the hyperplane $z=0$, namely,
$$
V_0=V_{n+1}(\mathcal{B}_0)=\alpha_n(1)
\int_{-1/\varepsilon}^0(1+\zeta^2)^{n/2}\,d\zeta,
$$
as is easily seen. The volume of the compact set which the hyperplane $\Upsilon$ cuts off from $\mathcal{H}_\varepsilon$ is the sum of $V_0$ and the volume $V$ of the compact set $\mathcal{B}_\varepsilon\subset\mathcal{H}_\varepsilon $ enclosed between the hyperplanes $\Upsilon'$ and $\Upsilon$ (see Figure \ref{fig:onesheet}). We can compute this volume $V$ with the same change of variables \eqref{eq10} we used above in Lemma \ref{lema3}. The new $\boldsymbol{y}$-hyperplane is just $\Upsilon'$,
and the $\zeta$-axis goes through the centers of the $n$-dimensional ellipsoidal sections of $\mathcal{H}_\varepsilon$ parallel to $\Upsilon$.

\begin{figure}[h]
\begin{center}
\includegraphics[scale=.8]{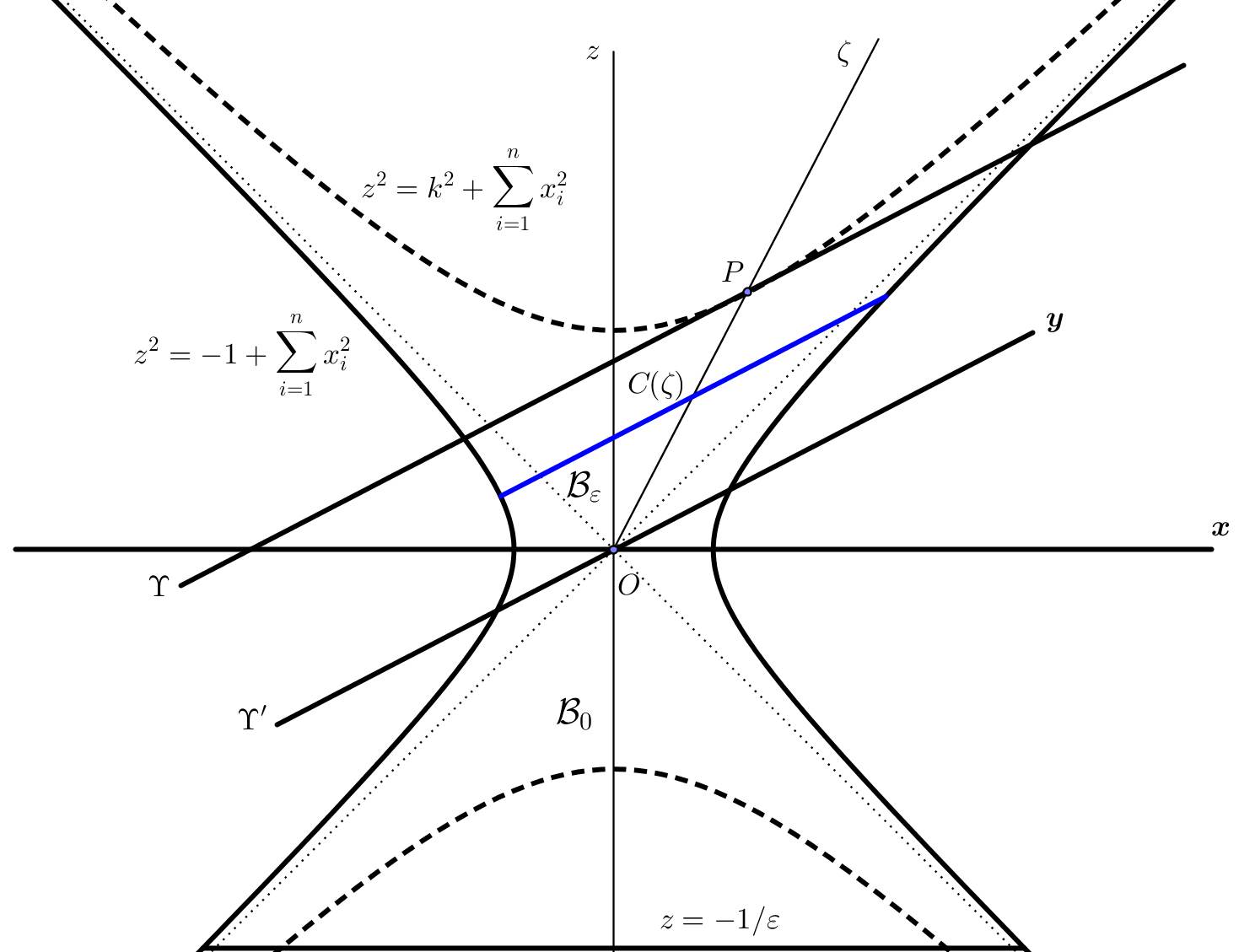}
\caption{}\label{fig:onesheet}
\end{center}
\end{figure}

The hyperplanes  $\Upsilon'$ and $\Upsilon$ turn, respectively, into the hyperplanes  $\zeta=0$ and $\zeta=\zeta_P$.  The volume of $\mathcal{B}_\varepsilon$ is then
$$
V=\sqrt{q}\int_{\mathcal{S}^\ast}\,d{\boldsymbol{y}}\,d\zeta
=\sqrt{q}\int_0^{\zeta_P}
V_n^*(C(\zeta))\,d\zeta.
$$

The $(\boldsymbol{y},\zeta)$-equation of the one-sheeted hyperboloid  is
\begin{equation}
\label{ec:quadratic9}
\sum_{i=1}^n(1-\overline{p_i}^2)y_i^2-2\sum_{i<j}\overline{p_i}\,\overline{p_j}y_iy_j=1+\zeta^2.
\end{equation}
The section with each fixed hyperplane $\zeta=\overline{\zeta}\ge 0$ is a $n$-ellipsoid of semiaxes $\sqrt{(1+\overline{\zeta}^2)/\lambda_i}$, where $\lambda_i$ ($1=1,\ldots,n$) are the eigenvalues of the symmetric matrix associated to the definite positive quadratic form on the left hand side of \eqref{ec:quadratic9}. As above in Lemma \ref{lema3} we also have, for $0\le \zeta\le \zeta_P$,
$$
V_n^*(C(\zeta))=\frac{\alpha_n(1)}{\sqrt{q}}\,(1+\zeta^2)^{n/2},
$$
so that
$V=\alpha_n(1)
\int_0^{\zeta_P}(1+\zeta^2)^{n/2}\,d\zeta$,
and
$$
V_{n+1}(\mathcal{S}_\varepsilon)=V_0+V=\alpha_n(1)\int_{-1/\varepsilon}^{\zeta_P}(1+\zeta^2)^{n/2}\,d\zeta,
$$
as stated.
\end{proof}

With the help of the previous lemma, proceeding as in Theorem \ref{theor:par} and Theorem \ref{teor3} (leaving the details for the reader), we have the following result.

\begin{teor} Let $n\ge1$,
$a_1$, \ldots, $a_{n+1}>0$, $k>0$ and $\varepsilon>0$ be fixed, and $h(\boldsymbol{x},z)=-\sum_{i=1}^n\frac{x_i^2}{a_i^2}+\frac{z^2}{a_{n+1}^2}$. Every hyperplane tangent to the upper sheet of the two-sheeted hyperboloid
$h(\boldsymbol{x},z)=k^2$ in a point $T=(\overline{\boldsymbol{x}},\overline{z})$ such that $\overline{z}\le \varphi(\varepsilon)$, where
\begin{equation} \label{condrecond}
\varphi(\varepsilon):=\frac k\varepsilon\Bigl(k+\sqrt{(1+\varepsilon^2)(1+k^2)}\Bigr)
\end{equation}
cuts off, from the set $\mathcal{H}_\varepsilon=\{(\boldsymbol{x},z)\vert z\ge -1/\varepsilon;\ h(\boldsymbol{x},z)\ge -1\}$, a $(n+1)$-dimensional compact set $\mathcal{S}_\varepsilon$  of constant Euclidean volume $H(k)=a_1\cdots a_{n+1} H_0(k)$.

Conversely, let $\varepsilon>0$ be fixed and $\mathcal{M}\colon z=f(\boldsymbol{x})$ be a $C^{(2)}$-surface lying inside the upper cone $\mathcal{C}^+=\{(\boldsymbol{x},z)\vert z>0,\ h(\boldsymbol{x},z)\ge 0\}$.
If the hyperplane tangent to $\mathcal{M}$ at any generic point $P=(\overline{\boldsymbol{x}},\overline{z})$ ($\overline{z}=f(\overline{\boldsymbol{x}})$) cuts off from the set $\mathcal{H}_\varepsilon=\{(\boldsymbol{x},z)\vert z\ge -1/\varepsilon;\ h(\boldsymbol{x},z)\ge -1\}$  a $(n+1)$-dimensional compact set which contains completely the base\footnote{Let $f_{x_i}(\overline{\boldsymbol{x}})=\overline{p_i}$ for $i=1,\ldots,n$, and $d= \overline{z}-\sum_{i=1}^n\overline{p_i}\,\overline{x_i}$.  They must satisfy the conditions
\begin{equation*}
 a_{n+1}^2-\sum_{i=1}^n a_i^2\overline{p_i}^2>0, \quad
d\ge 0, \quad \text{and}\quad (1+\varepsilon d)a_{n+1}\ge
\sqrt{1+\varepsilon^2 a_{n+1}^2}\sqrt{\sum_{i=1}^n a_i^2\overline{p_i}^2}.
\end{equation*}
} of $\mathcal{H}_\varepsilon$ and has a constant (independent of $P$) volume $V$,  then $\mathcal{M}$ forms the upper sheet of the two-sheeted hyperboloid $h(\boldsymbol{x},z)=k^2$, where $k=H^{-1}(V)$ ($k>0$), truncated according to the condition $z\le\varphi(\varepsilon)$.
\end{teor}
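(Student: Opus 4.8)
The plan is to imitate, almost word for word, the proofs of Theorem \ref{theor:par} and Theorem \ref{teor3}, now invoking Lemma \ref{lema3bis} in place of Lemma \ref{lema2} and Lemma \ref{lema3}. First I would reduce to the unit case $a_1=\cdots=a_{n+1}=1$ by the linear map $x_i=a_iX_i$ ($i=1,\ldots,n$), $z=a_{n+1}Z$, whose Jacobian $a_1\cdots a_{n+1}$ produces the factor in front of $H_0$ and which carries $h=k^2$ onto $z^2-\sum x_i^2=k^2$ and $\mathcal{H}_\varepsilon$ onto the truncated solid of Lemma \ref{lema3bis}; the geometric side conditions in the statement (the constant $\varphi(\varepsilon)$ of \eqref{condrecond} and the footnote inequalities) are then just the transforms of the hypotheses $q>0$, $d\ge 0$, $1+\varepsilon d\ge\sqrt{1+\varepsilon^2}\sqrt{1-q}$ of Lemma \ref{lema3bis}.

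For the direct part in the unit case, take $T=(\overline{\boldsymbol{x}},\overline{z})$ on the upper sheet $z=\sqrt{k^2+\sum x_i^2}$ and write its tangent hyperplane. A one-line computation gives $\overline{p_i}=\overline{x_i}/\overline{z}$, hence $d=\overline{z}-\sum\overline{p_i}\,\overline{x_i}=k^2/\overline{z}$ and $q=1-\sum\overline{p_i}^2=k^2/\overline{z}^2$, so that $\zeta_P=d/\sqrt{q}=k$. The conditions $q>0$ and $d\ge 0$ are immediate; the last one, which reads $\overline{z}+\varepsilon k^2\ge\sqrt{1+\varepsilon^2}\sqrt{\overline{z}^2-k^2}$, may be squared (both sides are nonnegative since $\overline{z}\ge k>0$) and rearranged into a quadratic inequality in $\overline{z}$ whose larger root is exactly $\varphi(\varepsilon)$ of \eqref{condrecond}, so that it is equivalent to $\overline{z}\le\varphi(\varepsilon)$. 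Lemma \ref{lema3bis} then yields $V_{n+1}(\mathcal{S}_\varepsilon)=H_0(\zeta_P)=H_0(k)$, the asserted constant.

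For the converse in the unit case, at a generic point $P=(\overline{\boldsymbol{x}},\overline{z})$ of $\mathcal{M}$, with $\overline{p_i}=f_{x_i}(\overline{\boldsymbol{x}})$, the hypothesis that the cut-off set be compact and contain the base of $\mathcal{H}_\varepsilon$ amounts to the hypotheses of Lemma \ref{lema3bis} holding at $P$, and that lemma then gives $V=H_0(d/\sqrt{q})$. Since $H_0'(t)=\alpha_n(1)(1+t^2)^{n/2}>0$, the function $H_0$ is strictly increasing, so $d/\sqrt{q}=k$ with $k=H_0^{-1}(V)$; that is,
\[
f(\boldsymbol{x})-\sum_{i=1}^n x_if_{x_i}(\boldsymbol{x})=k\,\sqrt{1-\sum_{i=1}^n\bigl(f_{x_i}(\boldsymbol{x})\bigr)^2},
\]
which is exactly the Clairaut equation \eqref{ec:hiperclaigen} from the proof of Theorem \ref{teor3}. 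Its complete integral is the $n$-parameter family $z=\sum\lambda_ix_i+k\sqrt{1-\sum\lambda_i^2}$; the envelopes of its $r$-parameter subfamilies ($0\le r\le n-1$) are ruled by $(n-r)$-flats, hence contain entire affine lines, and therefore none of them can lie inside the convex cone $\mathcal{C}^+$. The only solution lying inside $\mathcal{C}^+$ is thus the singular integral, the upper sheet of $z^2-\sum x_i^2=k^2$. Finally, applying at each $P\in\mathcal{M}$ the equivalence found in the direct part, the base-containment hypothesis forces $\overline{z}\le\varphi(\varepsilon)$, so $\mathcal{M}$ is that sheet truncated by $z\le\varphi(\varepsilon)$, as claimed.

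I expect the only point needing real attention beyond Theorem \ref{teor3} to be this last piece of bookkeeping: checking that the three analytic hypotheses of Lemma \ref{lema3bis} collapse to the single clean cutoff $\overline{z}\le\varphi(\varepsilon)$ (in particular that the squaring step is reversible because $\overline{z}\ge k>0$, which makes the smaller, negative root of the quadratic irrelevant), together with the remark that the weaker geometric hypothesis ``$\mathcal{M}$ lies inside the upper cone $\mathcal{C}^+$'' still suffices to rule out all the non-singular Clairaut solutions, exactly as ``$\mathcal{M}$ lies inside the upper sheet of $h=1$'' did in Theorem \ref{teor3}, since no affine line is contained in a convex cone with vertex at finite distance.
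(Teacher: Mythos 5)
Your proposal is correct and follows exactly the route the paper intends: it reduces to the unit case, applies Lemma~\ref{lema3bis} (with the correct computation $\overline{p_i}=\overline{x_i}/\overline{z}$, $\zeta_P=k$, and the verification that the third hypothesis of the lemma is equivalent to $\overline{z}\le\varphi(\varepsilon)$), and for the converse recovers the Clairaut equation \eqref{ec:hiperclaigen} and discards all non-singular solutions because they contain flats that cannot lie in $\mathcal{C}^+$ -- which is precisely the ``proceeding as in Theorem~\ref{theor:par} and Theorem~\ref{teor3}'' argument the paper leaves to the reader.
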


\subsection{The case of the general cone}


With the change of variables \eqref{eq10} used in Lemma \ref{lema3} and Lemma \ref{lema3bis}, we can prove the following lemma giving the volume limited by a cone and a hyperplane. The proof is similar to the given one for Lemma \ref{lema3} so we omit the details.

\begin{lema} \label{lema3ter}
Let $n\ge 1$, $(\overline{p_1},\ldots,\overline{p_n})\in\mathbb{R}^n$ and $P=(\overline{\boldsymbol{x}},\overline{z})\in\mathbb{R}^{n+1}$  be fixed.  Let $d= \overline{z}-\sum_{i=1}^n\overline{p_i}\,\overline{x_i}$ and $q=1-\sum_{i=1}^n \overline{p_i}^2$.
Suppose that $q>0$ and $d\ge 0$. The Euclidean volume of the cone $\mathcal{S}\subset E^{n+1}$ cut off by the hyperplane
$\Upsilon\colon z-\overline{z}=\sum_{i=1}^n\overline{p_i}(x_i-\overline{x_i})$
from the set $\mathcal{C}=\{(\boldsymbol{x},z)\vert z\ge 0; \sum_{i=1}^n x_i^2\le z^2\}$ is equal to
$C_0(\zeta_P)$, where $\zeta_P=d/\sqrt{q}$ and
\begin{equation} \label{ce0}
C_0(t)=\frac{\pi^{n/2}}{\Gamma(n/2+1)}\;\frac{t^{n+1}}{n+1}.
\end{equation}
\end{lema}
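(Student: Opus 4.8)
The plan is to repeat, almost verbatim, the argument of Lemma \ref{lema3}, using the very same linear change of variables \eqref{eq10}. First I would record the geometric set-up: the conditions $q>0$ and $d\ge 0$ guarantee that the hyperplane $\Upsilon$ lies on one side of the apex $O=(0,\ldots,0)$ of the cone $\mathcal{C}$ (the side carrying the base of $\mathcal{S}$), so that $\mathcal{S}$ is exactly the portion of $\mathcal{C}$ trapped between the hyperplane $\Upsilon'\colon z=\sum_{i=1}^n\overline{p_i}x_i$ through $O$ parallel to $\Upsilon$, and $\Upsilon$ itself; this is a genuine compact cone with apex $O$.

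Next I would apply the transformation \eqref{eq10}. Its Jacobian is $\sqrt{q}$ by \eqref{hypapp3}; the origin is fixed, $\Upsilon'$ and $\Upsilon$ become the hyperplanes $\zeta=0$ and $\zeta=\zeta_P=d/\sqrt{q}$ respectively (same computation as in Lemma \ref{lema3}), and the $(\boldsymbol{y},\zeta)$-equation of $\partial\mathcal{C}$ is the homogeneous analogue of \eqref{ec:quadratic5}, namely
$$
\sum_{i=1}^n(1-\overline{p_i}^2)y_i^2-2\sum_{i<j}\overline{p_i}\,\overline{p_j}y_iy_j=\zeta^2.
$$
Hence, for each fixed $\zeta=\overline{\zeta}\ge 0$, the section $C(\overline{\zeta})$ of the transformed domain $\mathcal{S}^\ast$ is the $n$-ellipsoid with semiaxes $\overline{\zeta}/\sqrt{\lambda_i}$, where the $\lambda_i$ are the eigenvalues of the quadratic form on the left (positive definite precisely because $q>0$); as in Lemma \ref{lema3} (see Note \ref{app2}) one has $\lambda_1\cdots\lambda_n=K_{n+1}(\overline{p_1},\ldots,\overline{p_n})=q$, so that by \eqref{eq:volumen}
$$
V_n^*(C(\overline{\zeta}))=\alpha_n(1)\prod_{i=1}^n\frac{\overline{\zeta}}{\sqrt{\lambda_i}}=\frac{\alpha_n(1)}{\sqrt{q}}\,\overline{\zeta}^{\,n}.
$$

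Integration by slices then closes the argument:
$$
V_{n+1}(\mathcal{S})=\sqrt{q}\int_0^{\zeta_P}V_n^*(C(\zeta))\,d\zeta=\alpha_n(1)\int_0^{\zeta_P}\zeta^{n}\,d\zeta=\frac{\alpha_n(1)}{n+1}\,\zeta_P^{\,n+1}=\frac{\pi^{n/2}}{\Gamma(n/2+1)}\,\frac{\zeta_P^{\,n+1}}{n+1},
$$
which is $C_0(\zeta_P)$ as in \eqref{ce0}. The only step that is not a literal transcription of Lemma \ref{lema3} is the bookkeeping at the apex: I would have to check that the lower endpoint of the $\zeta$-integration is $0$ — the section through the apex is a single point of $n$-volume zero, so no correction term of the kind $V_0$ in Lemma \ref{lema3bis} appears — and that $d\ge 0$ genuinely places $O$ on the correct side of $\Upsilon$ so that $\mathcal{S}$ is compact. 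Everything else (the value of the Jacobian, the factorization of the product of eigenvalues, the slice integration) is identical to Lemma \ref{lema3}, which is exactly why the details may be omitted; I expect the mild geometric verification at the apex to be the only point requiring care.
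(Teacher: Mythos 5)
Your proposal is correct and is precisely the argument the paper intends: the paper itself omits the details, saying only that one uses the change of variables \eqref{eq10} as in Lemma \ref{lema3}, and your transcription (Jacobian $\sqrt{q}$, homogeneous section equation with $\zeta^2$ on the right, eigenvalue product $q$, slices from $\zeta=0$ at the apex to $\zeta=\zeta_P$) carries that out correctly, including the check that no correction term $V_0$ arises at the apex.
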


We find references concerning the floating body for the cone in the case $n=2$, for example, in \cite[p. 31-32]{bravais}, \cite[p. 49-50]{besant}, and \cite[p. 346]{poldude}. The corresponding result for the $(n+1)$-dimensional case is the following one. We omit the proof, due to its similarity to the proofs of Theorem \ref{theor:par} and Theorem \ref{teor3}. A proof of the next result, from a geometric point of view, can be found in \cite[Theorem 3.4]{kimandsong}.

\begin{teor}\label{teor5}  Let $n\ge1$,
$a_1$, \ldots, $a_n>0$ and $k>0$  be fixed, and $h(\boldsymbol{x},z)=z^2-\sum_{i=1}^n\frac{x_i^2}{a_i^2}$. Every hyperplane tangent to the upper sheet of the two-sheeted hyperboloid
$h(\boldsymbol{x},z)=k^2$ cuts off from the cone $\mathcal{C}=\{(\boldsymbol{x},z)\vert z\ge 0;\ h(\boldsymbol{x},z)\ge 0\}$, a $(n+1)$-dimensional compact set $\mathcal{S}$  of constant Euclidean volume $C(k)=a_1\cdots a_n C_0(k)$.

Conversely, the $C^{(2)}$-surface which envelopes the family of the hyperplanes cutting off from the cone $\mathcal{C}$ a compact set\footnote{Let $P=(\overline{\boldsymbol{x}},\overline{z})$ be a generic point and $(\overline{p_1},\ldots,\overline{p_n},-1)$ be the normal vector of any hyperplane of that family. The conditions $1-\sum_{i=1}^n a_i^2\overline{p_i}^2>0$ and $\overline{z}-\sum_{i=1}^n\overline{p_i}\,\overline{x_i}\ge 0$ must be fulfilled.} of constant volume $V$ is the upper sheet of the two-sheeted hyperboloid $h(\boldsymbol{x},z)=k^2$ where $k=C^{-1}(V)$ ($k>0$).
\end{teor}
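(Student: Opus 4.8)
The plan is to mimic the proofs of Theorem~\ref{theor:par} and Theorem~\ref{teor3}, replacing Lemma~\ref{lema2} (resp. Lemma~\ref{lema3}) by Lemma~\ref{lema3ter}. First I would reduce to the model case $a_1=\cdots=a_n=1$ by the linear change $x_i=a_iX_i$, $z=Z$, whose Jacobian is $a_1\cdots a_n$; this turns $h(\boldsymbol{x},z)=k^2$ into $Z^2-\sum X_i^2=k^2$, turns $\mathcal{C}$ into $\{Z\ge0;\ \sum X_i^2\le Z^2\}$, and multiplies the cut-off volume by $a_1\cdots a_n$, so that it suffices to prove $C(k)=C_0(k)$ and $k=C_0^{-1}(V)$ in the normalized setting.

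For the direct statement, fix a point $T=(\overline{\boldsymbol{x}},\overline{z})$ on the upper sheet of $z^2-\sum x_i^2=k^2$, so $\overline{z}=\sqrt{k^2+\sum\overline{x_i}^2}$ and the tangent hyperplane has normal direction $(\overline{p_1},\ldots,\overline{p_n},-1)$ with $\overline{p_i}=\overline{x_i}/\overline{z}$. Then $q=1-\sum\overline{p_i}^2=k^2/\overline{z}^2>0$ and $d=\overline{z}-\sum\overline{p_i}\,\overline{x_i}=\overline{z}-\frac{1}{\overline{z}}\sum\overline{x_i}^2=k^2/\overline{z}>0$, so Lemma~\ref{lema3ter} applies and gives $\zeta_P=d/\sqrt{q}=(k^2/\overline{z})/(k/\overline{z})=k$; hence the cut-off cone has volume $C_0(k)$, independent of $T$. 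I would remark in passing that $T$ lies on the ray through the apex $O$ and the contact point $R$ of the parallel tangent hyperplane, exactly as in Theorem~\ref{teor3}.

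For the converse, let $\mathcal{M}\colon z=f(\boldsymbol{x})$ lie inside $\mathcal{C}^+$ and suppose every tangent hyperplane cuts off a compact piece of $\mathcal{C}$ of constant volume $V$. At a generic point $P=(\overline{\boldsymbol{x}},\overline{z})$ with $\overline{p_i}=f_{x_i}(\overline{\boldsymbol{x}})$, the footnote conditions $q=1-\sum\overline{p_i}^2>0$ and $d=\overline{z}-\sum\overline{p_i}\,\overline{x_i}\ge0$ hold, so Lemma~\ref{lema3ter} yields $V=C_0(d/\sqrt{q})$. Since $C_0(t)=\frac{\pi^{n/2}}{\Gamma(n/2+1)}\frac{t^{n+1}}{n+1}$ is strictly increasing on $(0,\infty)$, it is invertible there, and writing $k=C_0^{-1}(V)$ we get $d=k\sqrt{q}$, i.e. the Clairaut-type first-order PDE
\[
f(\boldsymbol{x})-\sum_{i=1}^n f_{x_i}(\boldsymbol{x})\,x_i=k\sqrt{1-\sum_{i=1}^n\bigl(f_{x_i}(\boldsymbol{x})\bigr)^2}
\]
for all $\boldsymbol{x}$. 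Its complete integral is the $n$-parameter family of hyperplanes $z=\sum a_ix_i+k\sqrt{1-\sum a_i^2}$; the envelopes of $r$-parameter subfamilies ($1\le r\le n-1$) are ruled and their rulings meet $\mathcal{C}$, so they are excluded by the hypothesis that $\mathcal{M}$ lies inside $\mathcal{C}^+$; the singular integral, obtained by also imposing $\partial_{a_i}=0$, i.e. $x_i=k a_i/\sqrt{1-\sum a_j^2}$, gives after eliminating the $a_i$ the upper sheet of $z^2-\sum x_i^2=k^2$ — exactly as in the last paragraph of the proof of Theorem~\ref{teor3}.

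The only genuinely new point, and hence the main thing to check carefully, is that the admissibility hypotheses match up: one must verify that ``the cut-off set is compact and contains the apex region appropriately'' translates precisely into $q>0$ and $d\ge0$ as recorded in the footnote, so that Lemma~\ref{lema3ter} is legitimately applicable at every point of $\mathcal{M}$, and that the excluded ruled envelopes indeed fail the containment condition (their generating $(n-r)$-flats are unbounded and pierce $\partial\mathcal{C}$, so the corresponding ``surface'' cannot lie inside the open cone $\mathcal{C}^+$). Everything else is a routine transcription of the two earlier proofs; since the paper explicitly says it omits the proof for brevity, I would at most sketch these matching conditions and the envelope discussion and refer back to Theorem~\ref{teor3}.
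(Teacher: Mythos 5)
Your proposal is correct and follows exactly the route the paper intends: it omits the proof of this theorem precisely because it is the argument you give, namely the reduction to $a_1=\cdots=a_n=1$, the application of Lemma~\ref{lema3ter} with $\overline{p_i}=\overline{x_i}/\overline{z}$ giving $\zeta_P=k$ for the direct part, and the same Clairaut equation $d=k\sqrt{q}$ with its singular integral as in Theorem~\ref{teor3} for the converse. The only quibble is your passing remark about ``the contact point $R$ of the parallel tangent hyperplane'': for the cone the parallel hyperplane through the apex meets $\mathcal{C}$ only at the apex, so that aside is vacuous, but it plays no role in the argument.
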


\section{The general elliptic case}

Concerning the volumes of the segments cut off by a plane from an ellipsoid obtained by revolving an ellipse around its major or minor axis (\emph{spheroids}), Archimedes proves in (CS) three pairs of propositions, according to the volume of the segment be equal to, or less than, or greater than, half the volume of the ellipsoid (see \cite[p. 141-150: (CS), Propositions 27-32]{heath}, and also \cite[p. 259]{dijks}). Comparing the volume $V(\mathcal{S})$ of the ellipsoidal segment to the volume $V(\mathcal{K})$ of the cone with the same base and axis (i.e., the segment joining the center of the base and the center of the ellipsoid), Archimedes derives, for example in the second case (see Figure~\ref{cir2} below for the used notation),
\[
\frac{V(\mathcal{S})}{V(\mathcal{K})}
=\frac{3\abs{OR}-\abs{RT}}{2\abs{OR}-\abs{RT}}.
\]

\begin{lema} \label{lema4}
Let $n\ge 1$ and $0<k<1$. Let $(\overline{p_1},\ldots,\overline{p_n})\in\mathbb{R}^n$ and $(\overline{\boldsymbol{x}},\overline{z})\in\mathbb{R}^{n+1}$  be fixed, verifying $0<\overline{z}^2+\sum_{i=1}^n\overline{x_i}^2<1$.

The Euclidean volume of the smaller cup $\mathcal{S}\subset E^{n+1}$ cut off from the sphere $z^2=1-\sum_{i=1}^n x_i^2$ by the hyperplane
$
\Upsilon\colon z-\overline{z}=\sum_{i=1}^n\overline{p_i}(x_i-\overline{x_i}),
$
is equal to $L_0(d)$, where
$$
d=\frac{\abs{\overline{z}-\sum_{i=1}^n\overline{p_i}\,\overline{x_i}}}{\sqrt{1+\sum_{1=1}^n\overline{p_i}^2}}
$$
is the distance from the origin to the hyperplane $\Upsilon$, and
\begin{equation} \label{ele0}
L_0(t)=\frac{\pi^{n/2}}{\Gamma(n/2+1)}\int_t^1 (1-\zeta^2)^{n/2}\,d\zeta.
\end{equation}

In fact, this volume is equal to $\frac{n+1}{(1-d^2)^{n/2}(1-d)}\int_d^1(1-\zeta)^{n/2}\,d\zeta$ times the volume of the cone which has the same base and the same axis as the cup.
\end{lema}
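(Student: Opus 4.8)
The plan is to mimic very closely the strategy used in Lemma \ref{lema3} and Lemma \ref{lema3bis}: reduce the integral to a one-dimensional integral over the ``height'' coordinate $\zeta$ along the axis perpendicular to $\Upsilon$, after an orthonormal-type change of variables that maps $\Upsilon$ to a coordinate hyperplane. First I would introduce the hyperplane $\Upsilon'$ parallel to $\Upsilon$ and passing through the origin, and note that by symmetry of the sphere the smaller cup cut off by $\Upsilon$ has the same volume as the cup cut off by the parallel hyperplane at signed distance $d$ from the center; the sign of $\overline z-\sum_i\overline{p_i}\,\overline{x_i}$ is irrelevant because of the reflection $z\mapsto -z$ composed with an orthogonal map in the $\boldsymbol x$-variables, which is why the absolute value appears in the formula for $d$.

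Next I would perform the change of variables. Unlike in Lemma \ref{lema3}, here the natural normalization must be an actual rotation (or at least a volume-preserving map sending the unit sphere to itself), since the quadric is a sphere; so I would take an orthogonal transformation carrying the direction $(\overline{p_1},\dots,\overline{p_n},-1)/\sqrt{1+\sum_i \overline{p_i}^2}$ to the last coordinate axis. Under this map the sphere stays $\sum_i y_i^2+\zeta^2=1$, the Jacobian is $1$, and the hyperplane $\Upsilon$ becomes $\zeta = d$. Then integration by slices gives
\begin{equation*}
V_{n+1}(\mathcal S)=\int_d^1 V_n^*(C(\zeta))\,d\zeta,
\end{equation*}
where $C(\zeta)$ is the $n$-sphere of radius $\sqrt{1-\zeta^2}$, so $V_n^*(C(\zeta))=\alpha_n(1)(1-\zeta^2)^{n/2}$ by \eqref{eq:volumen}. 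This immediately yields $V_{n+1}(\mathcal S)=L_0(d)$ with $L_0$ as in \eqref{ele0}.

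For the ``In fact'' clause, I would set up the Archimedean cone $\mathcal K$ with base $C(d)=C(\zeta_P)$ (where $\zeta_P:=d$) and vertex $R$, the contact point of $\Upsilon'$'s parallel tangent hyperplane with the sphere, which in the $(\boldsymbol y,\zeta)$-coordinates is the point $(\boldsymbol 0,1)$. Exactly as in Lemma \ref{lema3}, the sections $K(\zeta)$ of the cone parallel to its base satisfy $V_n^*(K(\zeta))/V_n^*(C(d))=\bigl((1-\zeta)/(1-d)\bigr)^n$, so integrating from $\zeta=d$ to $\zeta=1$ gives
\begin{equation*}
V_{n+1}(\mathcal K)=\frac{V_n^*(C(d))}{(1-d)^n}\int_d^1(1-\zeta)^n\,d\zeta
=\frac{\alpha_n(1)(1-d^2)^{n/2}(1-d)}{n+1},
\end{equation*}
whence the ratio $V_{n+1}(\mathcal S)/V_{n+1}(\mathcal K)=\frac{n+1}{(1-d^2)^{n/2}(1-d)}\int_d^1(1-\zeta^2)^{n/2}\,d\zeta$, matching the statement (note that the stated formula writes $(1-\zeta)^{n/2}$, but the substituted value at $\zeta=1$ vanishes either way; I would double-check whether $(1-\zeta^2)^{n/2}$ is intended). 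There is no real obstacle here: the only point requiring a little care is justifying that the relevant map is volume-preserving and that ``smaller cup'' is well-defined and symmetric under the reflection, i.e.\ that the hypothesis $0<\overline z^2+\sum_i\overline{x_i}^2<1$ guarantees $0<d<1$ so that $\Upsilon$ genuinely meets the open ball and the smaller of the two caps is the one with $\zeta\ge d$.
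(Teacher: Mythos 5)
Your proposal is correct and follows essentially the same route as the paper: the paper simply integrates by spherical slices perpendicular to $\Upsilon$ (using the rotational symmetry that you make explicit via an orthogonal change of variables with Jacobian $1$), and then computes the Archimedean cone exactly as you do. Your side remark about the integrand is also well taken: the ratio should involve $\int_d^1(1-\zeta^2)^{n/2}\,d\zeta$, and the $(1-\zeta)^{n/2}$ appearing in the statement (and in the paper's final display) is a typographical slip.
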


\begin{proof}
Integration by spherical slices $C(t)$ of measures $\alpha_n(\sqrt{1-t^2})$  (see Figure~\ref{cir2}) yields, for the $(n+1)$-dimensional
Euclidean volume of  $\mathcal{S}$,  the expression

\begin{figure}[h]
\begin{center}
\includegraphics[scale=0.8]{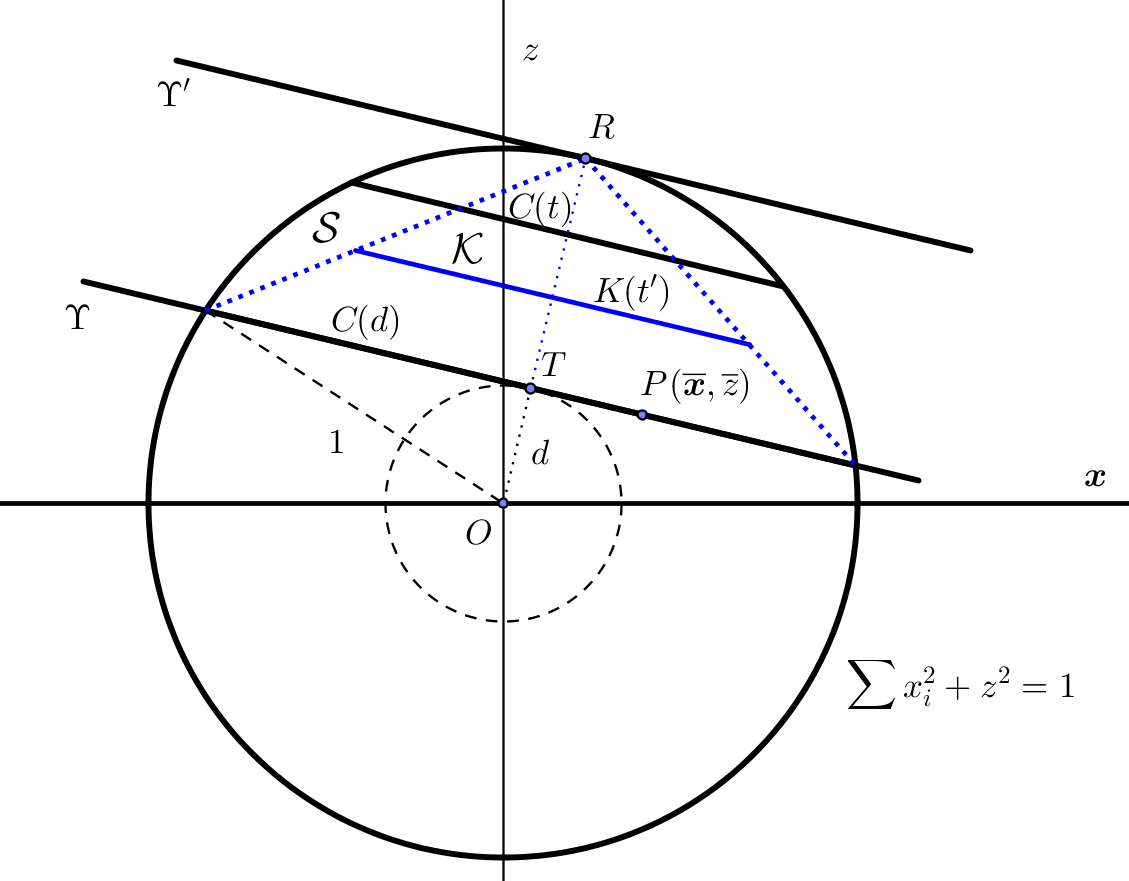}
\caption{} \label{cir2}
\end{center}
\end{figure}

\begin{equation*}
V_{n+1}(\mathcal{S})=\int_{d}^1 V_n(C(t))\,dt=\frac{\pi^{n/2}}{\Gamma(n/2+1)}\int_{d}^1 (1-t^2)^{n/2}\,dt.
\end{equation*}

The Archimedean cone $\mathcal{K}$ has base $C(d)$ and vertex $R$. As
\[
\frac{V_n(K(t))}{V_n(C(d))}=\Bigl(\frac{1-t}{1-d}\Bigr)^n,
\]
we have
\begin{align*}
V_{n+1}(\mathcal{K})&=\int_d^1 V_n(K(t))\,dt=\frac{V_n(C(d))}{(1-d)^n}\int_d^1 (1-t)^n\,dt\\
&=\frac{\alpha_n(1)}{n+1}\cdot(1-d^2)^{n/2} (1-d).
\end{align*}

Therefore,
\begin{equation*}
\frac{V_{n+1}(\mathcal{S})}{V_{n+1}(\mathcal{K})}%
=\frac{n+1}{(1-d^2)^{n/2}(1-d)}\int_d^1(1-t)^{n/2}\,dt. \qed
\end{equation*}
\renewcommand{\qed}{}
\end{proof}

\begin{rem}
\textit{a)} Observe that, for example, for $n=1$ we have
$$
V_2(\mathcal{S})=\cos^{-1}k-k\sqrt{1-k^2}
$$
and, for $n=2$,
$$
V_3(\mathcal{S})=\frac\pi3(1-k)^2(k+2).
$$

\textit{b)} When $n=2$, since $\abs{RT}/\abs{OR}=1-d$, we have, as Archimedes said,
\begin{equation*}
\frac{V_3(\mathcal{S})}{V_3(\mathcal{K})}=\frac{2+d}{1+d}=\frac{3\abs{OR}-\abs{RT}}{2\abs{OR}-\abs{RT}}.
\end{equation*}
\end{rem}

Our last theorem, which can be proved analogously as Theorem \ref{theor:par} and Theorem \ref{teor3}, characterizes the surfaces of flotation for ellipsoids.

\begin{teor}
\label{thm:ellipsoid}
Let $n\ge 1$ and $0<k<1$. Let
$a_1,\ldots,a_{n+1}>0$ be fixed, and $\ell(\boldsymbol{x},z)=\sum_{i=1}^n\frac{x_i^2}{a_i^2}+\frac{z^2}{a_{n+1}^2}$. The volume of the compact set ($(n+1)$-dimensional) $\mathcal{S}$ of smaller Euclidean volume  cut off from the ellipsoid $\ell(\boldsymbol{x},z)=1$ by any hyperplane tangent to the ellipsoid
$\ell(\boldsymbol{x},z)=k^2$, is constant and equal to $L(k)=a_1\cdots
a_{n+1} L_0(k)$.

Conversely, let $\mathcal{M}=\phi^{-1}(0)$ be a $C^{(2)}$-ovaloid  lying inside the ellipsoid $\ell(\boldsymbol{x},z)=1$.
If the  volume of the compact set ($(n+1)$-dimensional) $\mathcal{S}$ of smaller Euclidean volume cut off from the ellipsoid $\ell(\boldsymbol{x},z)=1$ by any hyperplane tangent to $\mathcal{M}$ is constant and equal to $V$,
then (except for a multiplicative constant) $\phi(\boldsymbol{x},z)=\ell(\boldsymbol{x},z)-k^2$, where $k=L^{-1}(V)$.
\end{teor}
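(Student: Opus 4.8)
The plan is to mirror the structure of the proofs of Theorem~\ref{theor:par} and Theorem~\ref{teor3}, reducing everything to the sphere (the case $a_1=\cdots=a_{n+1}=1$) via the linear transformation $x_i=a_iX_i$, $z=a_{n+1}Z$, which carries the ellipsoid $\ell(\boldsymbol{x},z)=1$ to the unit sphere, the ellipsoid $\ell=k^2$ to the sphere of radius $k$, a $C^{(2)}$-ovaloid inside the former to a $C^{(2)}$-ovaloid inside the latter, and multiplies all $(n+1)$-volumes by $a_1\cdots a_{n+1}$; hence $V=a_1\cdots a_{n+1}V^{\mathrm{sph}}$ and the general statement follows from the spherical one. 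For the direct part: a hyperplane tangent to the sphere of radius $k$ is at distance exactly $k$ from the origin, so by Lemma~\ref{lema4} (with $d=k$) the smaller cup it cuts from the unit sphere has volume $L_0(k)$, independent of the tangency point — done. (One should note $0<k<1$ guarantees such a hyperplane does meet the unit sphere, and the ``smaller cup'' is well defined since $k>0$.)

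For the converse, take $\mathcal{M}\colon z=f(\boldsymbol{x})$ (locally) a $C^{(2)}$-ovaloid inside the unit sphere and let $\Upsilon_P$ be its tangent hyperplane at a generic boundary point $P=(\overline{\boldsymbol{x}},\overline{z})$. Write the tangent hyperplane as $z-\overline{z}=\sum_{i=1}^n\overline{p_i}(x_i-\overline{x_i})$ and let $d(P)$ be its distance to the origin, as in Lemma~\ref{lema4}. Since $\mathcal{M}$ lies inside the unit sphere, $\Upsilon_P$ meets the sphere, and the smaller cup has volume $L_0(d(P))$ by Lemma~\ref{lema4}; the hypothesis forces $L_0(d(P))=V$ for all $P$. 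Because $L_0$ is strictly decreasing on $(0,1)$ — its derivative is $-\frac{\pi^{n/2}}{\Gamma(n/2+1)}(1-t^2)^{n/2}<0$ there — it is one-to-one, so $d(P)=k$ is constant, where $k=L_0^{-1}(V)\in(0,1)$. Thus every tangent hyperplane of $\mathcal{M}$ is at constant distance $k$ from the origin. The only compact convex $C^{(2)}$-ovaloid all of whose tangent hyperplanes are at distance $k$ from a fixed point is the sphere of radius $k$ centered at that point: indeed, ``tangent hyperplane at distance $k$ from $O$'' says precisely that the sphere of radius $k$ about $O$ is the envelope of the tangent hyperplanes of $\mathcal{M}$, i.e.\ that $\mathcal{M}$ and that sphere share all their tangent hyperplanes; since both are smooth closed convex hypersurfaces, they coincide. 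Equivalently, one can argue via the support function: for an ovaloid containing $O$ in its interior the support function $h(\boldsymbol{u})$ equals the distance from $O$ to the supporting hyperplane with outer normal $\boldsymbol{u}$, which here is the constant $k$, and the only convex body with constant support function $k$ is the ball of radius $k$. Transforming back, $\mathcal{M}$ is the ellipsoid $\ell(\boldsymbol{x},z)=k^2$, i.e.\ $\phi=\ell-k^2$ up to a multiplicative constant.

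Alternatively, one can run the Clairaut-type argument used in the earlier theorems: expressing $d(P)=k$ in terms of $f$ gives, at each point, the first-order PDE
\begin{equation*}
\Bigl(\overline{z}-\sum_{i=1}^n \overline{p_i}\,\overline{x_i}\Bigr)^2 = k^2\Bigl(1+\sum_{i=1}^n \overline{p_i}^{\,2}\Bigr),
\end{equation*}
whose complete integral is the $n$-parameter family of hyperplanes at distance $k$ from $O$, whose singular integral (envelope) is the sphere of radius $k$, and whose other solutions are envelopes of lower-dimensional subfamilies — ruled pieces that necessarily reach or cross the unit sphere and hence are excluded by the containment hypothesis, and in any case cannot form a closed ovaloid. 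This selects the sphere $z^2+\sum x_i^2=k^2$ as the only admissible $\mathcal{M}$.

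The main obstacle is the final rigidity step: concluding from ``all tangent hyperplanes at constant distance $k$ from the origin'' that $\mathcal{M}$ is exactly the sphere of radius $k$. For a $C^{(2)}$-ovaloid the cleanest route is the support-function observation above (which also pins down that $O$ must be the center); the PDE/envelope route reproves the same fact but requires the extra remark, as in the earlier theorems, that the spurious lower-dimensional envelope solutions are ruled out because they meet the unit sphere and fail to be closed convex surfaces. Everything else — the linear reduction, the application of Lemma~\ref{lema4}, and the monotonicity of $L_0$ — is routine and parallels Theorems~\ref{theor:par} and~\ref{teor3} verbatim.
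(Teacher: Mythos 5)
Your proposal is correct and follows what the paper intends: the paper gives no written proof of Theorem~\ref{thm:ellipsoid}, stating only that it ``can be proved analogously as Theorem \ref{theor:par} and Theorem \ref{teor3}'', and your reduction to the sphere by $x_i=a_iX_i$, $z=a_{n+1}Z$, the application of Lemma~\ref{lema4}, and the injectivity of $L_0$ to force the tangent hyperplanes to lie at constant distance $k$ from the center is exactly that intended argument; your Clairaut alternative reproduces the envelope route of the earlier theorems. Where you go beyond the paper is the final rigidity step, and this is actually the right instinct: unlike Theorems~\ref{theor:par} and~\ref{teor3}, here $\mathcal{M}$ is a closed ovaloid and cannot be a global graph $z=f(\boldsymbol{x})$, so the verbatim Clairaut argument only applies chart by chart (with vertical tangent hyperplanes handled by continuity or a rotation), and your support-function argument gives a cleaner global finish. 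One small point to tighten there: you are not given a priori that the origin lies inside $\mathcal{M}$, so the support function $h$ (taken with respect to the origin) satisfies $\abs{h(\boldsymbol{u})}=k$ rather than $h(\boldsymbol{u})=k$; since $h$ is continuous on the connected sphere of directions and $h\equiv -k$ is impossible for a body with nonempty interior (it would force width zero, or an empty intersection of the half-spaces for $\boldsymbol{u}$ and $-\boldsymbol{u}$), one concludes $h\equiv k$ and hence that $\mathcal{M}$ is the sphere of radius $k$ centered at the origin, which transforms back to $\ell(\boldsymbol{x},z)=k^2$ as claimed. With that one-line remark added, your proof is complete and, if anything, more careful than the paper's sketch.
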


\begin{rem}
In Theorem \ref{theor:par} we characterized the surface of flotation for a paraboloid. To conclude our paper, we observe that Theorem \ref{theor:par} can be deduced in a nice way from Theorem \ref{thm:ellipsoid}, as can be found in \cite[p. 50-51]{besant} for $n=2$ (see also \cite[p. 345]{poldude}).

Indeed, from Theorem \ref{thm:ellipsoid}, the surface of flotation of an ellipsoidal floating body of semiaxes $a_1$, \ldots, $a_n$, and $a_{n+1}$, that is, the envelope of the hyperplanes cutting off from this ellipsoid a compact segment of constant volume $V$, is the concentric and homothetic ellipsoid of semiaxes $ka_1$, \ldots $ka_n$, and $ka_{n+1}$, where $k=L^{-1}(V)\in (0,1)$, so that
$$
V=a_1\cdots
a_{n+1}\frac{\pi^{n/2}}{\Gamma(n/2+1)}\int_k^1 (1-\zeta^2)^{n/2}\,d\zeta.
$$

If the equation of the paraboloid floating is $2z=\sum_{i=1}^n\frac{x_i^2}{\alpha_i}$, then, from the floating ellipsoid $\sum_{i=1}^n\frac{x_i^2}{a_i^2}+\frac{(z-a_{n+1})^2}{a_{n+1}^2}=1$, let us make $a_1$, \ldots, $a_n$, and $a_{n+1}$ tend to infinity in such a way that $\frac{a_i^2}{a_{n+1}}\to\alpha_i$ for each $i=1,\ldots, n$. If $V$ denotes the finite volume immersed, then $\frac{V}{a_1\cdots a_na_{n+1}}\to 0$, so that $k\to 1$. Hence the surface of flotation is an identical paraboloid. The distance between its vertex and the vertex of the given paraboloid  is the limiting value of $a_{n+1}(1-k)$, namely,
$$
\gamma=\left(\frac{\Gamma(n/2+1)V}{(2\pi)^{n/2}\sqrt{\alpha_1\cdots\alpha_n}}\right)^{\frac2{n+2}}.
$$
 Therefore, the surface of flotation is the paraboloid $2(z-\gamma)=\sum_{i=1}^n\frac{x_i^2}{\alpha_i}$.
\end{rem}

\section*{Appendix}

\begin{append}\label{app1}
Let $n\ge 1$.
$$
J_{n+1}(a_1,\ldots,a_n)=\begin{vmatrix}
1&0&0&\cdots&0&a_1\\
0&1&0&\cdots&0&a_2\\
0&0&1&\cdots&0&a_3\\
\vdots&\vdots&\vdots&\ddots&\vdots&\vdots\\
0&0&0&\cdots&1&a_n\\
a_1&a_2&a_3&\cdots&a_n&1
\end{vmatrix}=1-\sum_{i=1}^{n}a_i^2.
$$
\end{append}

\begin{proof}
By induction. We have $J_2(a)=1-a^2$ and, assuming that $J_{n}(a_1,\ldots,a_{n-1})=1-\sum_{i=1}^{n-1}a_i^2$, we get, developing by the first row,

\begin{align*}
J_{n+1}(a_1,\ldots,a_n)&=J_{n}(a_2,\ldots,a_n)+(-1)^{n+2}a_1\cdot
\begin{vmatrix}
0&1&0&\cdots&0\\
0&0&1&\cdots&0\\
\vdots&\vdots&\vdots&\ddots&\vdots\\
0&0&0&\cdots&1\\
a_1&a_2&a_3&\cdots&a_n
\end{vmatrix} \\
&=1-\sum_{i=2}^{n}a_i^2+(-1)^{n+2}(-1)^{n+1}a_1^2=1-\sum_{i=1}^{n}a_i^2,
\end{align*}
as required.
\end{proof}

\begin{append} \label{app2}   Let $n\ge 1$.
$$
K_n(a_1,\ldots,a_n)=\begin{vmatrix}
1-a_1^2&-a_1a_2&\cdots& -a_1a_{n-1}& -a_1a_n\\
-a_1a_2&1-a_2^2&\cdots& -a_2a_{n-1}&-a_2a_n\\
\vdots&\vdots&\ddots&\vdots&\vdots\\
-a_1a_{n-1}&-a_2a_{n-1}&\cdots&1-a_{n-1}^2 &-a_{n-1}a_n\\
-a_1a_n&-a_2a_n&\cdots&-a_{n-1}a_n&1-a_n^2
\end{vmatrix}=1-\sum_{i=1}^{n}a_i^2.
$$
\end{append}

\begin{proof}
By induction.
For $n=1$ is trivial. For $n=2$,
\begin{align*}
K_2(a_1,a_2)&=\begin{vmatrix}
1-a_1^2&-a_1a_2\\
-a_1a_2&1-a_2^2
\end{vmatrix}
=1-(a_1^2 +a_2^2).
\end{align*}
Assume the result for all positive integers smaller than $n$. We can develop $K_n$ by the Desnanot-Jacobi adjoint matrix theorem \cite[Theorem 3.12, p. 111-113]{lcarroll}:
\begin{multline}\label{enapp3}
K_n(a_1,\ldots,a_n)=\frac1{K_{n-2}(a_2,\ldots,a_{n-1})}\\
\cdot\left\{ K_{n-1}(a_1,\ldots,a_{n-1})\cdot K_{n-1}(a_2,\ldots,a_n)
-D_{n-1}^2\right\},
\end{multline}
where we have denoted by $D_{n-1}$ (and we have used that the determinant of a matrix is equal to the determinant of the transposed) the determinant
\begin{align*}
D_{n-1}&=
\begin{vmatrix}
-a_1a_2&-a_1a_3& \cdots &-a_1a_{n-1}&-a_1a_n\\
1-a_2^2&-a_2a_3&\cdots&-a_2a_{n-1}& -a_2a_n\\
-a_3a_2&1-a_3^2&\cdots&-a_3a_{n-1}& -a_3a_n\\
\vdots&\vdots&\ddots&\vdots&\vdots\\
-a_{n-1}a_2&-a_{n-1}a_3&\cdots&1-a_{n-1}^2
&-a_{n-1}a_n
\end{vmatrix}
\\
%
&=(-1)^{n-1}a_2a_3\cdots a_n
\begin{vmatrix}
a_1&a_1& \cdots &a_1&a_1\\
-\frac1{a_2}+a_2&a_2&\cdots&a_2&a_2\\
a_3&-\frac1{a_3}+a_3&\cdots&a_3&a_3\\
\vdots&\vdots&\ddots&\vdots&\vdots\\
a_{n-1}&a_{n-1}&\cdots&-\frac1{a_{n-1}}+a_{n-1}
&a_{n-1}
\end{vmatrix}
\\
\phantom{D_{n-1}}
&=(-1)^{n-1}(-1)^{n-2}a_2a_3\cdots a_n
\begin{vmatrix}
0&0& \cdots &0&a_1\\
\frac1{a_2}&0&\cdots&0& a_2\\
0&\frac1{a_3}&\cdots&0& a_3\\
\vdots&\vdots&\ddots&\vdots&\vdots\\
0&0&\cdots&\frac1{a_{n-1}} &a_{n-1}
\end{vmatrix} \\
&=-a_2a_3\cdots a_n\cdot (-1)^n \,
\frac{a_1}{a_2a_3\cdots a_{n-1}}=(-1)^{n-1}a_1a_n.
\end{align*}

Substituting in \eqref{enapp3} and applying the induction hypothesis  it turns out
\begin{align*}
K_n(a_1,\ldots,a_n)&=\frac1{1-\sum_{i=2}^{n-1}a_i^2}\cdot
\left(
(1-\sum_{i=1}^{n-1}a_i^2)\cdot
(1-\sum_{i=2}^{n}a_i^2)
-a_1^2a_n^2\right)\\
&=1-\sum_{i=1}^{n}a_i^2,
\end{align*}
as required.
\end{proof}

\section*{Acknowledgements}
The authors thank their colleagues Pilar Benito and Luis J. Hernández for fruitful discussions which improved the presentation of the paper. They are also grateful to the referee for the thorough reading of the manuscript and for providing reference \cite{kimandsong}.



\end{document}